\newcommand{\M}{{\mathcal M}}       %
\newcommand{\R}{{\mathbb R}}       % Field of real numbers
\newcommand{\DD}{{\mathcal D}}
\newcommand{\CC}{{\mathcal C}}
\newcommand{\HH}{{\mathcal H}}
\newcommand{\RR}{{\mathcal R}}
\newcommand{\EE}{{\mathcal E}}
\newcommand{\diam}{\mathop{\rm diam}}
\newcommand{\dist}{{\rm dist}}
\newcommand{\rf}[1]{{(\ref{#1})}}
\newcommand{\supp}{\operatorname{supp}}
\newcommand{\vphi}{{\varphi}}
\newcommand{\ve}{{\varepsilon}}
\newcommand{\vv}{{\vspace{2mm}}}
\newcommand{\vvv}{\vspace{4mm}}
\newcommand{\wt}[1]{{\widetilde{#1}}}
\newcommand{\pom}{{\partial \Omega}}
\newcommand{\bad}{{\mathsf{Bad}}}
\newcommand{\good}{{\mathsf{Good}}}
\newcommand{\HD}{{\mathsf{HD}}}
\newcommand{\LM}{{\mathsf{LM}}}
\def\Xint#1{\mathchoice
{\XXint\displaystyle\textstyle{#1}}%
{\XXint\textstyle\scriptstyle{#1}}%
{\XXint\scriptstyle\scriptscriptstyle{#1}}%
{\XXint\scriptscriptstyle\scriptscriptstyle{#1}}%
\!\int}
\def\XXint#1#2#3{{\setbox0=\hbox{$#1{#2#3}{\int}$ }
\vcenter{\hbox{$#2#3$ }}\kern-.58\wd0}}
\def\avint{\Xint-}
\newtheorem{theorem}{Theorem}[section]
\newtheorem{lemma}[theorem]{Lemma}
\newtheorem{corollary}[theorem]{Corollary}
\newtheorem*{lemma*}{Lemma}
\newtheorem*{theorem*}{Theorem}
\theoremstyle{definition}
\theoremstyle{remark}
\newtheorem{rem}[theorem]{\bf Remark}
\numberwithin{equation}{section}
\newcommand{\brem}{\begin{rem}}
\newcommand{\erem}{\end{rem}}
\def\d{\partial}
\def\@tocline#1#2#3#4#5#6#7{\relax
  \ifnum #1>\c@tocdepth % then omit
  \else
    \par \addpenalty\@secpenalty\addvspace{#2}%
    \begingroup \hyphenpenalty\@M
    \@ifempty{#4}{%
      \@tempdima\csname r@tocindent\number#1\endcsname\relax
    }{%
      \@tempdima#4\relax
    }%
    \parindent\z@ \leftskip#3\relax \advance\leftskip\@tempdima\relax
    \rightskip\@pnumwidth plus4em \parfillskip-\@pnumwidth
    #5\leavevmode\hskip-\@tempdima
      \ifcase #1
       \or\or \hskip 1em \or \hskip 2em \else \hskip 3em \fi%
      #6\nobreak\relax
    \dotfill\hbox to\@pnumwidth{\@tocpagenum{#7}}\par
    \nobreak
    \endgroup
  \fi}
\begin{document}

\title{Rectifiability of harmonic measure}

\author[Azzam]{Jonas Azzam}
\address{Jonas Azzam
\\
Departament de Matem\`atiques
\\
Universitat Aut\`onoma de Barcelona
\\
Edifici C Facultat de Ci\`encies
\\
08193 Bellaterra (Barcelona), Catalonia
}
\email{jazzam@mat.uab.cat}

\author[Hofmann]{Steve Hofmann}

\address{Steve Hofmann
\\
Department of Mathematics
\\
University of Missouri
\\
Columbia, MO 65211, USA} \email{hofmanns@missouri.edu}

\author[Martell]{Jos\'e Mar{\'\i}a Martell}

\address{Jos\'e Mar{\'\i}a Martell
\\
Instituto de Ciencias Matem\'aticas CSIC-UAM-UC3M-UCM
\\
Consejo Superior de Investigaciones Cient{\'\i}ficas
\\
C/ Nicol\'as Cabrera, 13-15
\\
E-28049 Madrid, Spain} \email{chema.martell@icmat.es}

\author[Mayboroda]{Svitlana Mayboroda}

\address{Svitlana Mayboroda
\\
Department of Mathematics
\\
University of Minnesota
\\
Minnea\-po\-lis, MN 55455, USA} \email{svitlana@math.umn.edu}

\author[Mourgoglou]{Mihalis Mourgoglou}

\address{Mihalis Mourgoglou
\\
Departament de Matem\`atiques\\
 Universitat Aut\`onoma de Barce\-lo\-na and Centre de Recerca Matem\` atica
\\
Edifici C Facultat de Ci\`encies
\\
08193 Bellaterra (Barcelona), Catalonia
}
\email{mmourgoglou@crm.cat}

\author[Tolsa]{Xavier Tolsa}
\address{Xavier Tolsa
\\
ICREA and Departament de Matem\`atiques
\\
Universitat Aut\`onoma de Bar\-ce\-l\-ona
\\
Edifici C Facultat de Ci\`encies
\\
08193 Bellaterra (Barcelona), Catalonia
}
\email{xtolsa@mat.uab.cat}

\author[Volberg]{Alexander Volberg}
\address{Alexander Volberg
\\
Department of Mathematics
\\
Michigan State University
\\
East Lan\-sing, MI 48824, USA}

\email{volberg@math.msu.edu}

\begin{abstract}
In the present paper we prove that for any open connected set $\Omega\subset\R^{n+1}$, $n\geq 1$, and any $E\subset \partial \Omega$ with $\HH^n(E)<\infty$, absolute continuity of the harmonic measure $\omega$ with respect to the Hausdorff measure on $E$ implies that $\omega|_E$ is rectifiable.
This solves an open problem on harmonic measure which turns out to be an old conjecture even in the planar case $n=1$.
\end{abstract}

\date{\today}

\maketitle

\tableofcontents

\section{Introduction}

Our main result is the following.

\begin{theorem}\label{teo1}
Let $n\geq 1$ and $\Omega\subsetneq\R^{n+1}$ be an open  connected set
and let $\omega:=\omega^p$ be the harmonic measure in $\Omega$ where $p$ is a fixed point in $\Omega$.
Let $E\subset\partial\Omega$ be a subset with Hausdorff measure $\HH^n(E)<\infty$. Then:
\begin{itemize}
\item[(a)] If $\omega$ is absolutely continuous with respect to $\HH^n$ on $E$, then $\omega|_E$ is $n$-rectifiable, in the sense that  $\omega$-almost all of $E$ can be covered by a countable union of $n$-dimensional (possibly rotated) Lipschitz graphs.

\item[(b)]  If $\HH^n$ is absolutely continuous with respect to $\omega$ on $E$, then $E$ is an $n$-rectifiable set, 
in the sense that  $\HH^n$-almost all of $E$ can be covered by a countable union of $n$-dimensional (possibly rotated) Lipschitz graphs.
\end{itemize}
\end{theorem}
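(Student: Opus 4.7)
The plan is to reduce both (a) and (b) to showing that a carefully chosen subset of $E$ carries a rectifiable measure, using the characterization of $n$-rectifiability through $L^2$-boundedness of the Riesz transform due to Nazarov--Tolsa--Volberg. Throughout, write $\mu:=\omega|_F$ for a suitable compact subset $F\subset E$, and let
\[
\RR_\mu f(x)=\int\frac{x-y}{|x-y|^{n+1}}\,f(y)\,d\mu(y)
\]
denote the $n$-dimensional vector Riesz transform.

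\emph{Setup for part (a).} Given $\omega\ll\HH^n$ on $E$, I would first pass to a compact subset $F\subset E$ of positive $\omega$-measure on which the upper density $\Theta^{n,*}(\omega,x)=\limsup_{r\to 0}\omega(B(x,r))/r^n$ is bounded and the Radon--Nikodym derivative $d\omega/d\HH^n|_F$ lies between two positive constants. Such $F$ is extracted from $E$ by standard arguments using $\HH^n(E)<\infty$. It then suffices to prove that $\mu=\omega|_F$ is $n$-rectifiable, and to exhaust $E$ by countably many such $F$.

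\emph{From the Green function to Riesz transforms.} The PDE ingredient is the identity that, up to a dimensional constant, $\nabla_y G(p,y)$ \emph{is} the Riesz kernel integrated against $\omega$: in the distributional sense,
\[
\nabla G(p,x)=c_n\int\frac{x-y}{|x-y|^{n+1}}\,d\omega(y)
\]
away from $p$ and $\partial\Omega$. Thus pointwise bounds on $\nabla G$ at interior points translate into bounds on truncated Riesz transforms of $\omega$. I would build a Christ--David dyadic system $\DD$ on $\supp\mu$ and, for each $Q\in\DD$, a "corkscrew-type" point $x_Q\in\Omega$ at distance $\approx\ell(Q)$ from $Q$, and establish, via the maximum principle, comparison with the fundamental solution, and the upper density control, the bound $|\nabla G(p,x_Q)|\lesssim \omega(Q)/\ell(Q)^n$.

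\emph{Corona/stopping-time argument for $L^2$-boundedness.} The main obstacle is that $\Omega$ is only assumed open and connected: no NTA, no chord-arc, no Ahlfors regularity, and $\omega$ is not a priori doubling. I would then run a stopping-time decomposition on $\DD$, terminating a cube $Q$ whenever one of the following fails: polynomial control of the density $\omega(Q)/\ell(Q)^n$ across scales, flatness of $\supp\mu$ in $Q$, or accessibility of $p$ from a corkscrew at scale $\ell(Q)$. On each resulting coherent tree the stopped family traces out a Lipschitz-graph-like object approximating $\mu$, and on this graph the Riesz-transform bounds at the tree's scales follow from the PDE input above together with standard tangent-measure/good-$\lambda$ techniques. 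A Carleson packing estimate for the stopping cubes (the most delicate technical piece) then yields uniform $L^2(\mu)$-boundedness of the truncated $\RR_{\mu,\ve}$, and the Nazarov--Tolsa--Volberg theorem concludes that $\mu$ is $n$-rectifiable.

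\emph{Part (b) and the main obstacle.} For (b), assuming $\HH^n|_E\ll\omega$, I would pass to a compact $F'\subset E$ of positive $\HH^n$-measure on which $\omega$ and $\HH^n$ are comparable with positive constants; then $\omega|_{F'}$-null and $\HH^n|_{F'}$-null sets coincide, so applying (a) to $F'$ shows that $\omega|_{F'}$ is rectifiable and hence $F'$ itself is $n$-rectifiable, after which a countable exhaustion finishes the proof. I expect virtually all of the difficulty to lie in the absence of any quantitative geometric hypothesis on $\Omega$: obtaining the pointwise bound $|\nabla G(p,x_Q)|\lesssim\omega(Q)/\ell(Q)^n$ in such generality, and organising it, through the stopping-time/Corona machinery above, into a $T(1)$-type $L^2$ inequality for the Riesz transform, will be where essentially all of the technical effort goes.
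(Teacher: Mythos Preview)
Your reduction of (b) to (a) matches the paper's, and the overall philosophy---connect the Green function to the Riesz transform of $\omega$ and invoke Nazarov--Tolsa--Volberg---is correct. But the argument you sketch for (a) has a genuine gap at its core. You propose, for each dyadic cube $Q$, a ``corkscrew-type'' point $x_Q\in\Omega$ with $\dist(x_Q,Q)\approx\ell(Q)$, and then the bound $|\nabla G(p,x_Q)|\lesssim\omega(Q)/\ell(Q)^n$. In the generality of Theorem~\ref{teo1} no such points need exist: $\Omega$ is an arbitrary open connected set, with no porosity or corkscrew hypothesis, and $\partial\Omega$ may even have positive $(n{+}1)$-dimensional Lebesgue measure. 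The paper explicitly records that an earlier version \cite{AMT} required porosity precisely to secure such interior points; removing that assumption is the whole point of \cite{HMMTV} and of the present paper. Your corona scheme, which further stops on failure of flatness or of ``accessibility of $p$ from a corkscrew'', inherits the same circularity: absent any geometric input, there is no mechanism to prove a Carleson packing condition for the non-flat or inaccessible cubes without already knowing rectifiability.

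The paper circumvents interior points entirely. It evaluates the truncated Riesz transform at \emph{boundary} points $x\in Q$, using the representation $G(\cdot,p)=\EE(\cdot-p)-\int\EE(\cdot-z)\,d\omega^p(z)$ (valid $m$-a.e.) and a mean-value estimate on an auxiliary harmonic function to reduce matters to an $L^1$ bound on $G(\cdot,p)$ over a small ball centered on $\partial\Omega$. The key ingredient missing from your outline is an $n$-dimensional \emph{Frostman measure} $\mu$ supported on a compact $F_M\subset E$: Bourgain's lemma (Lemma~\ref{lembourgain}) then gives a uniform lower bound $\omega^z(B)\gtrsim\mu(\delta B)/r^n$ for \emph{all} $z$ in a small ball, which feeds into Lemma~\ref{l:w>G} to yield $r^{-1}G(y,p)\lesssim\omega^p(3B_Q)/\mu(Q)$. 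A simple stopping-time (high $\omega$-density, or low $\mu/\omega$ ratio) controls that ratio on the surviving ``good'' cubes, producing a \emph{pointwise} bound $\RR_*\omega^p(x)\le C$ on a set $G$ of positive $\omega^p$-measure. No corona decomposition and no Carleson packing are needed: one then applies the non-homogeneous $Tb$ theorem of Nazarov--Treil--Volberg to extract $G_0\subset G$ on which $\RR_{\omega^p|_{G_0}}$ is $L^2$-bounded, and concludes via \cite{NToV-pubmat}.
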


Notice that, in particular, the statement (a) ensures that any set $E\subset\partial\Omega$ with $\HH^n(E)<\infty$ and $\omega(E)>0$ where $\omega$ is absolutely continuous with respect to $\HH^n$ contains an $n$-rectifiable subset $F$ with $\HH^n(F)>0$.

We remark that the preceding theorem solves an open problem on harmonic measure which turns out to be an old conjecture even in the planar case $n=1$. See for example Conjecture
10 in Bishop's paper \cite{Bishop-conjectures}.

The metric properties of harmonic measure have attracted the attention of many mathematicians. Fundamental results of Makarov \cite{Mak1}, \cite{Mak2} establish that if $n+1=2$ then the Hausdorff dimension $\dim_\HH \omega = 1$ if the set $\partial\Omega$ is connected (and $\partial\Omega$ is not a point of course). The topology is
somehow felt by harmonic measure, and for a general domain $\Omega$ on the Riemann sphere whose complement has positive logarithmic capacity  there exists a subset of $E\subset \pom$ which supports harmonic measure in $\Omega$ and has Hausdorff dimension at most 1, by a very subtle result of Jones and Wolff \cite{JW}. In particular, the supercritical regime becomes clear on the plane: if $s \in (1, 2)$, $0 < \HH^s(E) < \infty$, then $\omega$ is always singular with respect to $\HH^s|_E)$. However, in the space ($n+1 > 2$) the picture is murkier: on the one hand, Bourgain \cite{Bo} proved that the dimension of harmonic measure always drops: $\dim_\HH \omega < n+1$. On the other hand, even for connected $E=\partial\Omega$, it turns out that $\dim_\HH \omega$ can be strictly bigger than $n$, by a celebrated result of Wolff \cite{W}.

Despite the wide variance in dimension, there are still unique phenomena that link harmonic measure and the geometry of the boundary which only occur in codimension one. In particular, there are deep connections between the absolute continuity of harmonic measure in $\mathbb{R}^{n+1}$ with respect to $n$-dimensional Hausdorff measure and the rectifiability of the underlying set, which has been a subject of thorough investigation for even longer than the aforementioned results:  In 1916 F. and M. Riesz proved that for a simply connected domain in the complex plane, with a rectifiable boundary, harmonic measure is absolutely continuous with respect to arclength measure on
the boundary \cite{RR}. More generally, if only a portion of the boundary is
rectifiable, Bishop and Jones \cite{BJ} have shown that harmonic measure is absolutely
continuous with respect to arclength on that portion. They have also proved that the result of \cite{RR} may fail in the absence of some topological hypothesis (e.g., simple connectedness).

The higher dimensional analogues of \cite{BJ} include absolute continuity of harmonic measure with respect to the Hausdorff measure for Lipschitz graph domains \cite{Da} and non-tangentially accessible (NTA) domains \cite{DJ}, \cite{Se}. To be precise, \cite{Da}, \cite{DJ}, \cite{Se} establish a quantitative scale-invariant result, the $A_\infty$ property of harmonic measure, which in the planar case was proved by Lavrent'ev \cite{Lv}. See also \cite{HM12}, \cite{BL}, \cite{Ba}, \cite{Az}, \cite{AMT0}, \cite{Mo}, \cite{ABHM15} along with \cite{AHMNT} in this context. We shall not give a precise definition of NTA domains here (see \cite{JK}), but let us mention that they necessarily satisfy interior and exterior cork-screw conditions as well as a Harnack chain condition, 
that is, certain quantitative analogues of connectivity and openness, respectively. 
On the other hand,
some counterexamples show that some topological restrictions, even
stronger than in the planar case,
 are needed for the absolute continuity of $\omega$ with respect to $\HH^n$ \cite{Wu}, \cite{Z}.

In the present paper we attack the converse direction. We establish that rectifiability is {\it necessary} for absolute continuity of the harmonic measure. This is a free boundary problem. However, the departing assumption, absolute continuity of the harmonic measure with respect to the Hausdorff measure of the set, is essentially the weakest meaningfully possible from a PDE point of view, putting it completely out of the realm of more traditional work, e.g.,  that related to minimization of functionals. At the same time, absence of any a priori topological restrictions on the domain (porosity, flatness, suitable forms of connectivity) notoriously prevents from using the conventional PDE toolbox. 

To this end, recall that the series of paper by Kenig and Toro \cite{KT1, KT2, KT3} established that in the presence of a Reifenberg flatness condition and Ahlfors-David regularity, $\log k \in VMO$ if and only if $\nu\in VMO$, where $k$ is the Poisson kernel with pole at some fixed point, and $\nu$ is the unit normal to the boundary. Moreover, given the same background hypotheses, the condition that $\nu \in VMO$  is
equivalent to a uniform rectifiability condition with vanishing trace.  
 Going further, in \cite{HMU} the authors proved that, in a uniform domain $\Omega$ having an Ahlfors-David regular boundary, scale-invariant bounds on $k$ in $L^p$ (or the weak-$A_\infty$ condition for harmonic measure) imply uniform rectifiability of $\partial\Omega$. Moreover, 
the same result was recently established, more generally, for an open set $\Omega$ 
(not necessarily connected),
satisfying an interior 
cork-screw (i.e., interior porosity) condition and having an Ahlfors-David regular boundary,  see \cite{HMIV}. An important underlying thread of all these results is a priori topological restrictions on the domain, for instance, Ahlfors-David regularity immediately implies porosity: there is $r_0>0$ so that every ball $B$ centered at $\pom$ of radius at most $r_0$ contains another ball $B'\subset \R^{n+1}\setminus\pom$ with $r(B)\approx r(B')$, with the implicit constant depending only on the Ahlfors-David regularity condition.

The present work develops some intricate estimates on the harmonic measure and the Green function which take advantage of the absolute continuity of harmonic measure in a hostile geometrical environment at hand and ultimately yield bounds on the (suitably interpreted) Riesz transform
$$\RR \mu (x) = \int \frac{x-y}{|x-y|^{n+1}}\,d \mu(y),$$
applied to the harmonic measure $\mu =\omega^p$.
The latter allows us to invoke the recent resolution of the David-Semmes conjecture in co-dimension 1 (\cite{NToV}, \cite{NToV-pubmat}, see also \cite{HMM} in the context of uniform domains), establishing that boundedness of the Riesz transforms implies rectifiability. 
We note that in Theorem~\ref{teo1} connectivity is just a cosmetic assumption needed to make sense of harmonic measure at a given pole. In the presence of multiple components, one can work with one component at a time.

An interesting point is that the original problem on geometry of harmonic measure never mentions any singular integrals whatsoever, let alone non-homogeneous ones. However, it turns out that non-homogeneous harmonic analysis point of view again proved itself very useful, exactly as it happened in solving David-Semmes conjecture in co-dimension 1.

Our paper arises from the union of two separate works, \cite{AMT} and \cite{HMMTV}, which will not be published. In \cite{AMT}, a version of Theorem \ref{teo1} was proved under the additional assumption that the boundary of $\Omega$ is {\it porous} in $E$ (in the sense described above). In \cite{HMMTV} the porosity assumption was removed. Both works,  \cite{AMT} and \cite{HMMTV} are available only as preprints on ArXiv. For the purposes of publication we combined them into the present manuscript.

The organization of the paper is as follows. After recalling some notation in Section 2, we recall and develop some lemmas concerning harmonic measure in Section 3, as well as review the definition of the cubes of David and Mattila \cite{David-Mattila}. One particularly useful result from here which may be of independent interest is Lemma \ref{l:w>G}, an inequality relating harmonic measure and the Green function previously known for NTA domains or for open sets with ADR boundary but which now holds in any bounded domain. The main body of the proof of Theorem \ref{teo1} is contained in Section 4, and in Section 5 we discuss some of its application, such as classifying all sets of absolute continuity for NTA domains.

\subsection*{Acknowledgments} 
\null \hskip-.1cm The second author was supported in part by NSF grant DMS 1361701. The third author has been partially supported by ICMAT Severo Ochoa project SEV-2011-0087 and he acknowledges that the research leading to these results has received funding from the European Research Council under the European Union's Seventh Framework Programme (FP7/2007-2013)/ ERC agreement no. 615112 HAPDEGMT. The fourth author is supported in part by the Alfred P. Sloan Fellowship, the NSF INSPIRE Award DMS 1344235, NSF CAREER Award DMS 1220089 and  NSF UMN MRSEC Seed grant DMR 0212302. The sixth author was supported by the ERC grant 320501 of the European Research Council (FP7/2007-2013) (which also funded the first and fifth authors),  by 2014-SGR-75 (Catalonia), MTM2013-44304-P (Spain), and by the Marie Curie ITN MAnET (FP7-607647). The last author was partially supported by the NSF grant DMS-1265549.
The results of this paper were obtained while the second, third, fourth, sixth and seventh authors were participating in the \textit{Research in Paris} program at the \textit{Institut Henri Poincar\'e}, and while the first author was at the 2015 ICMAT program \textit{Analysis and Geometry in Metric Spaces}. All authors would like to express their gratitude to these institutions for their support and nice working environments.

\vv
\section{Some notation}

We will write $a\lesssim b$ if there is $C>0$ so that $a\leq Cb$ and $a\lesssim_{t} b$ if the constant $C$ depends on the parameter $t$. We write $a\sim b$ to mean $a\lesssim b\lesssim a$ and define $a\sim_{t}b$ similarly.

For sets $A,B\subset \R^{n+1}$, we let 
\[\dist(A,B)=\inf\{|x-y|:x\in A,y\in B\}, \;\; \dist(x,A)=\dist(\{x\},A),\]
We denote the open ball of radius $r$ centered at $x$ by $B(x,r)$. For a ball $B=B(x,r)$ and $\delta>0$ we write $r(B)$ for its radius and $\delta B=B(x,\delta r)$. We let $U_\ve (A)$ to be the $\ve$-neighborhood of a set $A\subset \R^{n+1}$. For $A\subset \R^{n+1}$ and $0<\delta\leq\infty$, we set
\[\HH^{n}_{\delta}(A)=\inf\left\{\textstyle{ \sum_i \diam(A_i)^n: A_i\subset\R^{n+1},\,\diam(A_i)\leq\delta,\,A\subset \bigcup_i A_i}\right\}.\]
Define the {\it $n$-dimensional Hausdorff measure} as
\[\HH^{n}(A)=\lim_{\delta\downarrow 0}\HH^{n}_{\delta}(A)\]
and the {\it $n$-dimensional Hausdorff content} as $\HH^{n}_{\infty}(A)$. We let $m$ denote the Lebesgue measure in $\R^{n}$ so that, for some universal constant $c$ depending on $n$ and all Lebesgue measurable subsets $A\subseteq \R^{n}$, $\HH^{n}(A)=cm(A)$. See Chapter 4 of \cite{Ma} for more details. \\

Given a signed Radon measure $\nu$ in $\R^{n+1}$ we consider the $n$-dimensional Riesz
transform
$$\RR\nu(x) = \int \frac{x-y}{|x-y|^{n+1}}\,d\nu(y),$$
whenever the integral makes sense. For $\ve>0$, its $\ve$-truncated version is given by 
$$\RR_\ve \nu(x) = \int_{|x-y|>\ve} \frac{x-y}{|x-y|^{n+1}}\,d\nu(y).$$
For $\delta\geq0$
 we set
$$\RR_{*,\delta} \nu(x)= \sup_{\ve>\delta} |\RR_\ve \nu(x)|.$$
We also consider the maximal operator
$$\M^n_\delta\nu(x) = \sup_{r>\delta}\frac{|\nu|(B(x,r))}{r^n},$$
In the case $\delta=0$ we write $\RR_{*} \nu(x):= \RR_{*,0} \nu(x)$ and $\M^n\nu(x):=\M^n_0\nu(x)$.\\
%If $\mu$ is a fixed positive Radon measure and $f\in L^1_{loc}(\mu)$, the centered maximal Hardy-Littlewood operator applied to $f$ is 
%$$M_\mu f(x) = \sup_{r>0}\frac1{\mu(B(x,r))}\int_{B(x,r)}|f|\,d\mu.$$

In what follows, $\Omega$ will always denote a connected domain. If $\Omega$ is bounded, for $f$ a continuous function on $\d\Omega$, the set of {\it upper functions} for $f$ are the superharmonic functions $h$ on $\Omega$ for which $\liminf_{\Omega\ni x\rightarrow \xi} h(x)\geq f(\xi)$, $\xi\in \pom$. If we define $u_{f}(x)=\inf\{h(x):h\mbox{ is an upper function for }f\}$ for $x\in \Omega$, the usual Perron method shows that $u_{f}$ is a harmonic function.  One can alternatively work with {\it lower functions} by replacing superharmonic by subharmonic, $\liminf$ by $\limsup$ and $\inf$ by $\sup$. By the Riesz representation theorem, there is a Radon measure measure $\omega^{x}$ on $\d\Omega$ satisfying
\[ u_{f}(x)=\int f\, d\omega^{x} \mbox{ for all }f\in C_{c}(\d\Omega),\]
which we call the harmonic measure for $\Omega$ with pole at $x$. We refer the reader to \cite[Chapter 3]{Hel} for full details. 
For unbounded domains, harmonic measure can similarly be defined (see for example \cite[Section 3]{HM12}).

\vv

\section{Preliminaries for harmonic measure}

\subsection{Equivalence between (a) and (b) in Theorem \ref{teo1}}

An easy application of the Radon-Nykodim Theorem shows that the statements (a) and (b) in Theorem (a) are equivalent.
Indeed, assume that the statement (a) holds. To prove (b), consider $E\subset \d\Omega$ such that $\HH^{n}(E)<\infty$ and $\HH^{n}\ll \omega$ on $E$. By the Radon-Nykodim Theorem, there is a function $g \in L^1(\omega^p)$ such that
$\HH^n|_E = g \omega^p|_E$.
Let $F\subseteq E$ be the set where $g >0$. Then $\HH^n|_F = \HH^n|_E$ and $\HH^n\ll \omega^{p}\ll \HH^{n}$ on $F$. Hence $\omega^{p}|_F$ is $n$-rectifiable by the statement (a). So $\HH^n|_F$ is $n$-rectifiable,
or equivalently, $F$ is $n$-rectifiable.

The converse implication (b) $\Rightarrow$ (a) is analogous and is left for the reader.

\subsection{Properties of the Green function}
Let $\EE$ denote the fundamental solution for the Laplace equation in $\R^{n+1}$, so that $\mathcal{E}(x)=c_n\,|x|^{1-n}$ for $n\geq 2$, and 
$\EE(x)=-c_1\,\log|x|$ for $n=1$, $c_1, c_n>0$. A {\it Green function} $G_{\Omega}:\Omega\times \Omega\rightarrow[0,\infty]$ for an open set $\Omega\subseteq \R^{n+1}$ is a function with the following properties: for each $x\in \Omega$, $G_{\Omega}(x,y)=\EE(x-y)+h_{x}(y)$ where $h_{x}$ is harmonic on $\Omega$, and whenever $v_{x}$ is a nonnegative superharmonic function that is the sum of $\EE(x-\cdot)$ and another superharmonic function, then  $v_{x}\geq G_{\Omega}(x,\cdot)$ (\cite[Definition 4.2.3]{Hel}). 

An open subset of $\R^{n+1}$ having a Green function will be called a Greenian set. The class of domains considered in Theorem \ref{teo1} are always Greenian. Indeed, all open subsets of $\R^{n+1}$ are Greenian for $n\geq 2$ (\cite[Theorem 4.2.10]{Hel}); in the plane, if $\HH^{1}(\d\Omega)>0$, then $\d\Omega$ is nonpolar (p. 207 Theorem 11.14 of \cite{HKM}) and domains with nonpolar boundaries are Greenian by Myrberg's Theorem (see Theorem 5.3.8 on p. 133 of \cite{AG}).

For a bounded open set, we may write the Green function exactly \cite[Lemma 5.5.1]{Hel}: for $x,y\in\Omega$, $x\neq y$, define
\begin{equation}\label{green}
G(x,y) = \mathcal{E}(x-y) - \int_{\partial\Omega} \mathcal{E}(x-z)\,d\omega^y(z).
\end{equation}

For  $x\in\R^{n+1}\setminus \Omega$ and $y\in\Omega$, we will also set 
\begin{equation}\label{green2}
G(x,y)=0.
\end{equation}

Note that the equation \rf{green} is still valid for $x\in\R^{n+1}\setminus \overline\Omega$ and $y\in\Omega$ by  \cite[Theorem 3.6.10]{Hel}. 
The case when $x\in\partial\Omega$ and $y\in\Omega$ is more delicate and the identity  \rf{green} may fail.
However, we have the following partial result:

\begin{lemma}\label{lemgreen*}
Let $\Omega$ be a Greenian domain and let $y\in\Omega$. For $m$-almost all $x\in\Omega^c$ we have
\begin{equation}\label{eqdf12}
\mathcal{E}(x-y) - \int_{\partial\Omega} \mathcal{E}(x-z)\,d\omega^y(z)=0.
\end{equation}
\end{lemma}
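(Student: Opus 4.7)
The key observation is that the function
\[
u(x) := \EE(x-y) - \int \EE(x-z)\,d\omega^y(z)
\]
and the function $\wt u(x) := G(x,y)\chi_{\Omega}(x)$ (extended by $0$ on $\Omega^c$) are both $L^1_{\mathrm{loc}}(\R^{n+1})$ and coincide pointwise on the open dense set $\R^{n+1}\setminus\partial\Omega$: on $\Omega$ by \rf{green}, and on $\R^{n+1}\setminus\overline\Omega$ by Helms's theorem quoted in the excerpt. The plan is to identify $u$ and $\wt u$ as distributions on $\R^{n+1}$ by computing their distributional Laplacians, and then to conclude via Weyl's lemma that they agree $m$-a.e.

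To compute the Laplacians: since $u$ is the Newtonian potential of the signed measure $\delta_y - \omega^y$, one has $\Delta u = \omega^y - \delta_y$ in the sense of distributions (using $-\Delta\EE = \delta_0$). The same identity $\Delta \wt u = \omega^y - \delta_y$ amounts to the classical representation formula
\[
\int_\Omega G(y,z)\Delta\phi(z)\,dm(z) = -\phi(y) + \int \phi\,d\omega^y, \qquad \phi \in C_c^\infty(\R^{n+1}),
\]
which for a smooth boundary is Green's second identity; in the present generality of a Greenian domain it is verified by approximating $\Omega$ from within by regular subdomains $\Omega_k \uparrow \Omega$, applying the identity in each $\Omega_k$, and passing to the limit using monotone convergence of Green functions $G^{\Omega_k}\uparrow G^\Omega$ and the convergence $\int\phi\,d\omega^y_k \to \int\phi\,d\omega^y$ for continuous $\phi$, both standard for exhaustions of Greenian domains.

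With matching distributional Laplacians, $u - \wt u$ is a distribution on $\R^{n+1}$ with vanishing Laplacian, hence smooth and harmonic by Weyl's lemma. Because $u - \wt u = 0$ on the open dense set $\R^{n+1}\setminus\partial\Omega$, continuity forces $u - \wt u \equiv 0$; in particular $u = \wt u$ as $L^1_{\mathrm{loc}}$ functions, and Lebesgue differentiation gives $u(x) = \wt u(x) = 0$ for $m$-a.e.\ $x \in \Omega^c$, which is \rf{eqdf12}. The only delicate point is the verification of the representation formula for $\wt u$ in the absence of any regularity of $\partial\Omega$ (in particular, $\partial\Omega$ may carry positive Lebesgue measure, so one cannot simply discard it); this is precisely what the monotone exhaustion argument is designed to handle.
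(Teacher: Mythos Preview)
Your argument is correct, but the paper takes a much more direct route. Rather than computing distributional Laplacians and invoking Weyl's lemma, the paper simply integrates the expression in \rf{eqdf12} over an arbitrary compact set $A\subset\Omega^c$ and applies Fubini: this produces $U_A(y)-\int_{\partial\Omega}U_A\,d\omega^y$, where $U_A=\EE*\chi_A$ is continuous, bounded, and harmonic in $A^c\supset\Omega$, so the difference vanishes by the Perron characterization of harmonic measure. Since $A$ is arbitrary, the integrand vanishes $m$-a.e.\ on $\Omega^c$. This avoids entirely the exhaustion machinery (monotone convergence of Green functions, weak convergence of harmonic measures) that you need to justify $\Delta\wt u=\omega^y-\delta_y$; those facts are indeed standard, but the paper's argument sidesteps them with a two-line computation. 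Your approach does yield the slightly stronger statement that $u$ and $\wt u$ agree as $L^1_{\rm loc}$ functions globally, and it makes transparent the structural reason behind the identity (both functions have the same Riesz measure), but for the purpose at hand the Fubini trick is shorter and requires less potential-theoretic input.
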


Clearly, in the particular case where $m(\partial
\Omega)=0$, this result is a consequence of the aforementioned fact that \rf{green} also holds for all 
$x\in\R^{n+1}\setminus \overline\Omega$, $y\in\Omega$, with $G(x,y)=0$. However, Theorem \ref{teo1} deals with arbitrary domains in $\R^{n+1}$ and so we cannot assume
that $m(\partial\Omega)=0$.

\begin{proof}
Let $A\subset\Omega^c$ be a compact set with $m(A)>0$. Observe that the function 
$U_A = \EE * \chi_A$ is continuous, bounded in $\R^{n+1}$, and harmonic in $A^c$. Then, by Fubini we have  for all $\,y\in\Omega$,
\begin{align*}
\int_A\biggl(\mathcal{E}(x-y) - \int_{\partial\Omega} \mathcal{E}(x-z)&\,d\omega^y(z)\biggr)\,dm(x) \\
&=
U_A(y) - \int_{\partial\Omega} \int_E \mathcal{E}(x-z)\,dm(x)\,d\omega^y(z)\\
&= U_A(y) - \int_{\partial\Omega} U_A(z)\,d\omega^y(z) = 0, 
\end{align*}
using that $U_A$ is harmonic in $\Omega\subset A^c$  and bounded on $\partial\Omega$ for the last identity.
Since the compact set $A\subset \Omega^c$ is arbitrary, the lemma follows.
\end{proof}

Let us remark that a slightly more elaborated argument shows that the identity \rf{eqdf12} holds for $\HH^s$-almost
all $x\in\Omega^c$, for any $s>n-1$.

\begin{rem}\label{remgreen*}
As a corollary of the preceding lemma we deduce that
$$G(x,y) = \mathcal{E}(x-y) - \int_{\partial\Omega} \mathcal{E}(x-z)\,d\omega^y(z)\quad \mbox{ for $m$-a.e. $x\in\R^{n+1}$.}$$
\end{rem}
We will also need the following auxiliary result.

\begin{lemma}\label{l:w>G}
Let $n\ge 2$ and $\Omega\subset\R^{n+1}$ be a bounded open connected set.
Let $B=B(x_0,r)$ be a closed ball with $x_0\in\pom$ and $0<r<\diam(\pom)$. Then, for all $a>0$,
\begin{equation}\label{eq:Green-lowerbound}
 \omega^{x}(aB)\gtrsim \inf_{z\in 2B\cap \Omega} \omega^{z}(aB)\, r^{n-1}\, G(x,y)\quad\mbox{
 for all $x\in \Omega\backslash  2B$ and $y\in B\cap\Omega$,}
 \end{equation}
 with the implicit constant independent of $a$.
\end{lemma}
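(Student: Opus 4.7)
Fix $y\in B\cap\Omega$ and set
\[
u(x):=\omega^{x}(aB),\qquad w(x):=r^{\,n-1}G(x,y),\qquad m:=\inf_{z\in 2B\cap\Omega}\omega^{z}(aB).
\]
The goal is to find a constant $c=c(n)>0$ with $u\geq c\,m\,w$ on $\Omega\setminus 2B$, which is exactly \eqref{eq:Green-lowerbound}. My strategy is to apply the extended maximum principle to $v:=u-c\,m\,w$ on the open set $U:=\Omega\setminus\overline{2B}$.

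\textbf{Setup.} The function $u$ is harmonic in $\Omega$ (as $\omega^{(\cdot)}(aB)=\int\chi_{aB}\,d\omega^{(\cdot)}$), and $w$ is harmonic on $U$ since $y\in B\subset\overline{2B}$ lies outside $U$. From the formula \eqref{green} and the positivity of $\EE$ when $n\geq 2$, the subtracted harmonic term is nonnegative, so
\[
G(x,y)\leq \EE(x-y)=c_n\,|x-y|^{1-n}\qquad \text{for all } x,y\in\Omega.
\]
In particular, for every $x\in\overline U$ one has $|x-y|\geq r$ (since $x\notin 2B^\circ$ and $y\in B$), hence $w(x)\leq c_n$. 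Thus $v$ is harmonic and bounded on $U$.

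\textbf{Boundary estimates.} I next verify that $\liminf_{U\ni x\to\zeta}v(x)\geq 0$ for quasi every $\zeta\in\partial U\subset (\partial(2B)\cap\overline\Omega)\cup(\partial\Omega\setminus\overline{2B})$. On the inner boundary $\partial(2B)\cap\Omega$ both $u$ and $w$ are continuous (we are in the interior of $\Omega$); there $u(\zeta)\geq m$ by definition of $m$, while $w(\zeta)\leq c_n$, so choosing $c=1/c_n$ forces
\[
v(\zeta)\geq m-\tfrac{1}{c_n}\,m\,c_n=0.
\]
At each regular boundary point $\zeta\in\partial\Omega\setminus\overline{2B}$, the Green function satisfies $G(x,y)\to 0$ as $x\to\zeta$, while $u\geq 0$, so $\liminf v\geq 0$. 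The remaining irregular points of $\partial\Omega$ form a polar set.

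\textbf{Conclusion and main obstacle.} Since $U$ is an open subset of the Greenian set $\Omega$, it is itself Greenian, and $v$ is a bounded harmonic function on $U$ whose lower semicontinuous boundary trace is nonnegative quasi everywhere on $\partial U$. The extended minimum principle (see, e.g., \cite[Ch.~3]{Hel} or \cite{AG}) therefore gives $v\geq 0$ on $U$, i.e.\ $u(x)\geq (1/c_n)\,m\,r^{n-1}G(x,y)$ for $x\in\Omega\setminus\overline{2B}$; continuity then upgrades this to all $x\in\Omega\setminus 2B$. The main obstacle is that no topological or regularity hypothesis (porosity, NTA, ADR) is assumed on $\Omega$, so $\partial\Omega$ may contain irregular boundary points on which $G(\cdot,y)$ need not vanish. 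The point that makes the argument work in such generality is that those points form a polar set, which the extended minimum principle tolerates; this is precisely what allows one to replace the usual NTA-type arguments by pure potential theory in this otherwise hostile geometric setting.
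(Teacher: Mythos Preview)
Your proof is correct and follows essentially the same route as the paper's: compare the harmonic functions $\omega^{x}(aB)$ and $c\,m\,r^{n-1}G(x,y)$ on $\Omega\setminus\overline{2B}$, use the pointwise bound $G(x,y)\le \EE(x-y)$ on $\partial(2B)\cap\Omega$ together with the definition of $m$, use that $G(\cdot,y)$ vanishes at regular points of $\partial\Omega$ while the exceptional set is polar, and then apply the extended maximum principle (the paper cites \cite[Lemma~5.2.21]{Hel}) to a function bounded below. The only cosmetic differences are your explicit choice $c=1/c_n$ and your phrasing of the boundary decomposition.
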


\begin{proof}
Fix $y\in B\cap\Omega$ and note that for every $x\in\partial (2B)\cap\Omega$ 
we have
\begin{equation}\label{eqop1}
G(x,y)\lesssim \frac1{|x-y|^{n-1}}\leq \frac c{r^{n-1}} \leq \frac{c\,\omega^x(aB)}{r^{n-1}\,\inf_{z\in 2B\cap \Omega} \omega^{z}(aB)}
.
\end{equation}

Let us observe that the two non-negative functions 
$$
u(x)=c^{-1}\,G(x, y)\,r^{n-1}\,\inf_{z\in 2B\cap \Omega} \omega^{z}(aB)\qquad\text{ and }\qquad v(x)=\omega^x(aB)
$$
are  harmonic, hence continuous, in  $\Omega\setminus \overline{B}$. Note that \eqref{eqop1} says that $u\le v$ in $\partial (2B)\cap\Omega$ and hence $\lim_{\Omega\setminus \overline{2B}\ni z\to x} (v-u)(z)=(v-u)(x)\ge 0$ for every $x\in \partial (2B)\cap\Omega$. On the other hand, for a fixed $y\in B\cap \Omega$, one has that $\lim_{\Omega \ni z\to x} G(z,y)=0$ for every $x\in\pom$ with the exception of a polar set (\cite[Theorem 5.5.4]{Hel}). Gathering all these we conclude that $\liminf_{\Omega\setminus \overline{2B}\ni z\to x} (v-u)(z)\ge 0$ for every $x\in\partial(\Omega\setminus \overline{2B})$ with the exception of a polar set. Finally, we clearly have that $v\ge 0$ on $\Omega\setminus \overline{2B}$ and also that
$u(x)\lesssim r^{n-1}\,|x-y|^{1-n}\leq 1$ for every $x\in \Omega\setminus \overline{2B}$ and $y\in B\cap\Omega$. Thus $v-u$ is bounded from below in $\Omega\setminus \overline{2B}$. We can now invoke the maximum principle \cite[Lemma 5.2.21]{Hel} to conclude that $u\le v$ on $\Omega\setminus \overline{2\,B}$ and hence in $\Omega\setminus 2\,B$. 
\end{proof}

%Otherwise, we take any open ball $B$ so that $2\,B\subset \Omega$ then consider $\widetilde{\Omega}=\Omega\setminus \overline{B}$  and then the two harmonic measures (the one for $\Omega$ and the one for $\widetilde{\Omega}$) are mutually absolutely continuous on $\partial\Omega$. Then by Kelvin transform with respect to the center of the ball we can reduce matters to the case of a bounded domain. Further details are left to the interested reader.
% 

\vv

\subsection{Bourgain's Lemma}

We recall a result of Bourgain from \cite{Bo}. 

\begin{lemma}
\label{lembourgain}
There is $\delta_{0}>0$ depending only on $n\geq 1$ so that the following holds for $\delta\in (0,\delta_{0})$. Let $\Omega\subsetneq \R^{n+1}$ be a  bounded domain, $n-1<s\le n+1$,  $\xi \in \partial \Omega$, $r>0$, and $B=B(\xi,r)$. Then 
\[ \omega^{x}(B)\gtrsim_{n,s} \frac{\mathcal H_\infty^{s}(\partial\Omega\cap \delta B)}{(\delta r)^{s}}\quad \mbox{  for all }x\in \delta B\cap \Omega .\]
\end{lemma}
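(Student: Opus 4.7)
The plan is to combine Frostman's lemma with the representation of bounded continuous harmonic functions via harmonic measure. I will build a Frostman measure $\mu$ on $F:=\pom\cap\delta B$, form its Newtonian potential $U^\mu$, and exploit that $U^\mu$ is automatically harmonic on $\Omega$ since $\supp\mu\subset\pom$ is disjoint from $\Omega$; a comparison of a pointwise lower bound for $U^\mu$ at $x\in\delta B\cap\Omega$ with pointwise upper bounds on $\pom\cap B$ and $\pom\setminus B$, plugged into the identity $U^\mu(x)=\int_{\pom}U^\mu\,d\omega^x$, will yield the desired inequality.

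Concretely in the case $n\ge 2$, Frostman's lemma produces a nonzero measure $\mu$ supported in $F$ with $\mu(B(y,t))\le t^s$ for all $y,t$ and $\mu(F)\sim_{n,s}\HH^s_\infty(F)$. Define $U^\mu(y):=\int|y-z|^{1-n}\,d\mu(z)$. Since $s>n-1$, a layer-cake computation gives the global bound
\[U^\mu(y)\lesssim_{s,n}\mu(F)^{(s-n+1)/s}\le(\delta r)^{s-n+1}\quad\text{for all }y\in\R^{n+1}.\]
Three pointwise estimates then enter the argument. For $x\in\delta B\cap\Omega$, $|x-z|\le 2\delta r$ for every $z\in F$ gives $U^\mu(x)\gtrsim(\delta r)^{1-n}\HH^s_\infty(F)$. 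For $y\in\pom\cap B$, one just uses the global bound. For $y\in\pom\setminus B$ with $\delta\le 1/2$, the inequality $|y-z|\ge r/2$ yields $U^\mu(y)\lesssim r^{1-n}\HH^s_\infty(F)$.

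Since $U^\mu$ is continuous on $\R^{n+1}$ (this uses $s>n-1$), bounded, and harmonic on $\Omega$, the representation $U^\mu(x)=\int_{\pom}U^\mu\,d\omega^x$ holds. Splitting the integral over $\pom\cap B$ and $\pom\setminus B$, plugging in the three bounds, and dividing by $(\delta r)^{1-n}\HH^s_\infty(F)$ produces
\[1\lesssim\frac{(\delta r)^s\,\omega^x(B)}{\HH^s_\infty(F)}+\delta^{n-1},\]
so for $n\ge 2$ the last term is absorbed by choosing $\delta_0$ small enough depending on $n,s$, giving the required estimate.

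The main anticipated obstacle is the planar case $n=1$, where $|y-z|^{1-n}\equiv 1$ is useless and one must replace $U^\mu$ by a logarithmic potential such as $L^\mu(y):=\int\log(r/|y-z|)\,d\mu(z)$. The structural scheme is identical (such $L^\mu$ is still harmonic on $\Omega$), but the relevant pointwise bounds now involve $\log(1/\delta)$ factors in place of powers of $\delta$, and one must carefully choose the reference scale of the logarithm and rebalance the Frostman integral estimates against $\omega^x(B)$ to recover the stated power $(\delta r)^{-s}$ on the right-hand side. A secondary technical matter, in both cases, is to verify the boundary continuity of the potential needed to justify the representation; this again relies on $s>n-1$ through the same layer-cake control.
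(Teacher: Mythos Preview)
Your approach is essentially the same as the paper's: both construct a Frostman measure on $\pom\cap\delta B$, form its Newtonian (or logarithmic, when $n=1$) potential, and exploit the three regimes (lower bound on $\delta B$, global upper bound from the growth condition $s>n-1$, and the decay on $B^c$) to compare with harmonic measure. The only packaging difference is in the last step: the paper subtracts the sup on $B^c$ and normalizes to obtain a subharmonic function $v\le 1$ with $v\le 0$ on $B^c$, then invokes the Perron method to get $v(x)\le\omega^x(\overline B)$; you instead use the Poisson representation $U^\mu(x)=\int_{\pom}U^\mu\,d\omega^x$ (valid since $U^\mu$ is bounded, continuous on $\overline\Omega$, and harmonic in $\Omega$) and split the boundary integral over $B$ and $B^c$. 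The two are equivalent.

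Two small points. First, you state that $\delta_0$ depends on $n,s$, while the lemma claims dependence on $n$ only; if you keep the $B^c$ bound in the form $U^\mu(y)\le (r/2)^{1-n}\mu(F)$ and the $\delta B$ lower bound as $U^\mu(x)\ge(2\delta r)^{1-n}\mu(F)$ before passing to $\HH^s_\infty$, the absorption condition becomes $2^{n-1}\le \tfrac12\cdot 2^{1-n}\delta^{1-n}$, which depends only on $n$ (the $s$–dependence lands only in the implicit constant, via the global bound). Second, for $n=1$ you only sketch the argument; the paper carries it out in full with $u(x)=\int\log|x-y|^{-1}\,d\mu(y)$, obtaining the analogous three estimates with $\log(1/\delta)$ factors and an explicit $\delta_0$ depending only on the Frostman constant $c=c(n)$. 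Your outline is correct, but you should fill in those bounds rather than leave them as an ``anticipated obstacle.''
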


\begin{proof}
Without loss of generality, we assume $\xi=0$ and $r=1$.  We denote
$$\rho=\frac{\mathcal H_\infty^{s}(\partial\Omega\cap \delta B)}{\delta^{s}}.$$
Let $\mu$ be a Frostman measure supported in $\delta B\cap \partial\Omega$ so that 
\begin{itemize}\itemsep=0.2cm
\item $\mu(B(x,r))\leq r^{s}$ for all $x\in \R^{n+1}$ and $r>0$, 
\item  $\rho\delta^{s} \geq \mu(\delta B)\geq c\rho\delta^{s}$ where $c=c(n)>0$.
\end{itemize}

First assume $n>1$. Define the function
\[
u(x)=\int\frac1{|x-y|^{n-1}}\,d\mu(y),\]
which is harmonic out of $\supp\mu$ and satisfies the following properties:
\begin{enumerate}[(i)]
\item For $x\in \delta B$,
\[ u(x)\geq 2^{1-n}\delta^{1-n}\mu(\delta B)\geq c2^{1-n}\delta^{s-n+1}\rho.\]

\item For every $x\in \R^{n+1}$,  since $\mu(\delta B)\leq \delta^{s}$ and $s>n-1$,
\begin{align*}
u(x)
& =\int_{\delta B}\int_{0}^{|x-y|^{-1}}(n-1)t^{n-2}dtd\mu(y)
=(n-1)\int_{0}^{\infty} t^{n-2}\mu(B(x,t^{-1}))dt\\
& \leq (n-1)\left(\int_{0}^{\frac{1}{2\delta}}t^{n-2}\mu(\delta B)+\int_{\frac{1}{2\delta}}^{\infty} t^{n-2-s}dt\right)\\
& \leq \delta^{s-n+1}(n-1)\left(\frac{2^{-n+1}}{n-1}+\frac{2^{s-n+1}}{s-n+1}\right)
\leq \delta^{s-n+1}\frac{6(n-1)}{s-n+1}.
\end{align*}

%
%
%\[u(x)
%\leq \sum_{j=0}^{\infty} \int_{\delta 2^{-j}\leq |x-y|<\delta 2^{-j+1}} \frac1{|x-y|^{n-1}}\,d\mu(x)
%\leq \sum_{j=0}^{\infty}(2^{-j+1}\delta)^{s}(2^{-j}\delta)^{1-n}\sim \delta^{s-n+1}.\]

\item For $x\in B^{c}$,
\[u(x)=\int \frac1{|x-y|^{n-1}}\,d\mu(x)
\leq 2^{n-1}\mu(\delta B)\leq 2^{n-1} \rho delta^{s}.\]

%\item Thus, by (ii) and (iii) with the maximum principle, we have that $u(x)\leq \frac{2^{n+2}}{s-n+1}\delta^{s-n+1}$ for all $x\in \R^{n+1}$. 
\end{enumerate}

Set 
\[ v(x)=\frac{u(x)-\sup_{  B^{c}}  u}{\sup u}.\]
Then
\begin{enumerate}[(a)]
\item $v$ is harmonic in $(\delta B\cap \partial\Omega)^{c}$,
\item $v\leq 1$,
\item $v\leq 0$ on $B^{c}$,
\item for $x\in \delta B$ and $\delta\leq (2^{1-2n}c)^{\frac{1}{n-1}}$
\[v(x) \geq   \frac{c2^{1-n}\delta^{s-n+1}\rho-2^{n-1}\rho\delta^{s}}{\frac{6(n-1)}{s-n+1}\delta^{s-n+1}}
\geq c\frac{s-n+1}{6(n-1)}2^{-n}\rho.
\]
\end{enumerate}
Let $\phi$ be any continuous compactly supported function such that $\phi\equiv 1$ on $B$ and  $0\le \phi\le 1$. Note that the previous items imply that $v$ is subharmonic in $\Omega$ and satisfies $\limsup_{\Omega \ni x\rightarrow \xi} v(x)\leq \phi(\xi)$, that is, $v$ is a lower function for $\phi$. Hence the the usual Perron method in bounded domains gives that $\int \phi d\omega^{x} \ge v(x)$ for every $x\in \Omega$ (let us recall that $\int \phi d\omega^{x}$ is precisely the sup of all lower functions for $\phi$).
Taking next the infimum over all such $\phi$, we get  $\omega^{x}(\overline{B})\geq v(x)$ for every $x\in \Omega$, and the lemma follows easily. 
\vv

Now we consider the case $n=1$. Define $\rho$ and $\mu$ just as before but now set 
\[
u(x)=\int \log\frac{1}{|x-y|}d\mu(y).\]
Again, this is harmonic off of $\supp \mu$ and satisfies the following properties:
\begin{enumerate}[(i)]
\item For $x\in \delta B$,
\[u(x)\geq \log \frac{1}{2\delta}\mu(\delta B)\geq c\rho\delta^{s}\log\frac{1}{2\delta}.\]

\item Also, for $x\in B$, since $s\in (0,2)$ (assuming $\delta<\frac14$) 
\begin{align*}
u(x)
& 
\le
\int_{\delta B}\int_{\frac12}^{|x-y|^{-1}}\frac{1}{t}dtd\mu(y)
=
\int_{\frac12}^{\infty} \frac{\mu(B(x,t^{-1}))}{t}dt
\leq \int_{\frac12}^{\frac{1}{2\delta}}\frac{\mu(\delta B)}{t}dt+\int_{\frac{1}{2\delta}}^{\infty} \frac{1}{t^{1+s}}dt\\
& 
=
\delta^{s} \log\frac{1}{\delta}+\frac{2^{s}\delta^{s}}{s}
\leq \frac{8}{s}\delta^{s}\log\frac{1}{2\delta}
\end{align*}
\item For $x\in B^{c}$ (assuming $\delta<\frac{1}{4}$)
\[
u(x)\le \mu(\delta B)\log 2 \leq  \min\{\rho,1\} \delta^{s} \log 2\leq  \frac{2}{s}\min\{\rho,1\}\delta^{s}\log\frac{1}{2\delta}.\]

\item The previous two estimates give $u\leq \frac{8}{s} \delta^{s}\log\frac{1}{2\delta} $.\end{enumerate}
Define $v$ just as before. For $x\in \delta B$ and $\delta<e^{-\frac{2\log 2}{c}}/2$
\[
v(x)
\geq 
\frac{c\rho \delta^{s}\log\frac{1}{2\delta}-\rho\delta^{s}\log 2}{  \frac{6}{s} \delta^{s}\log\frac{1}{2\delta}}
\geq \frac{cs}{16}\rho.
\]
From here the proof continues just as before and this completes the case $n=1$.
\end{proof}
\vv

\subsection{The dyadic lattice of David and Mattila}\label{secdya}

Now we will consider the dyadic lattice of cubes
with small boundaries of David-Mattila associated with $\omega^p$. This lattice has been constructed in \cite[Theorem 3.2]{David-Mattila} (with $\omega^p$ replaced by a general Radon measure). 
Its properties are summarized in the next lemma.

\begin{lemma}[David, Mattila]
\label{lemcubs}
Consider two constants $C_0>1$ and $A_0>5000\,C_0$ and denote $W=\supp\omega^p$. Then there exists a sequence of partitions of $W$ into
Borel subsets $Q$, $Q\in \DD_k$, with the following properties:
\begin{itemize}
\item For each integer $k\geq0$, $W$ is the disjoint union of the ``cubes'' $Q$, $Q\in\DD_k$, and
if $k<l$, $Q\in\DD_l$, and $R\in\DD_k$, then either $Q\cap R=\varnothing$ or else $Q\subset R$.
\vv

\item The general position of the cubes $Q$ can be described as follows. For each $k\geq0$ and each cube $Q\in\DD_k$, there is a ball $B(Q)=B(z_Q,r(Q))$ such that
$$z_Q\in W, \qquad A_0^{-k}\leq r(Q)\leq C_0\,A_0^{-k},$$
$$W\cap B(Q)\subset Q\subset W\cap 28\,B(Q)=W \cap B(z_Q,28r(Q)),$$
and
$$\mbox{the balls\, $5B(Q)$, $Q\in\DD_k$, are disjoint.}$$

\vv
\item The cubes $Q\in\DD_k$ have small boundaries. That is, for each $Q\in\DD_k$ and each
integer $l\geq0$, set
$$N_l^{ext}(Q)= \{x\in W\setminus Q:\,\dist(x,Q)< A_0^{-k-l}\},$$
$$N_l^{int}(Q)= \{x\in Q:\,\dist(x,W\setminus Q)< A_0^{-k-l}\},$$
and
$$N_l(Q)= N_l^{ext}(Q) \cup N_l^{int}(Q).$$
Then
\begin{equation}\label{eqsmb2}
\omega^p(N_l(Q))\leq (C^{-1}C_0^{-3d-1}A_0)^{-l}\,\omega^p(90B(Q)).
\end{equation}
\vv

\item Denote by $\DD_k^{db}$ the family of cubes $Q\in\DD_k$ for which
\begin{equation}\label{eqdob22}
\omega^p(100B(Q))\leq C_0\,\omega^p(B(Q)).
\end{equation}
We have that $r(Q)=A_0^{-k}$ when $Q\in\DD_k\setminus \DD_k^{db}$
and
\begin{equation}\label{eqdob23}
\omega^p(100B(Q))\leq C_0^{-l}\,\omega^p(100^{l+1}B(Q))\quad
\end{equation}
for all $l\geq1$ such that $100^l\leq C_0$ and $Q\in\DD_k\setminus \DD_k^{db}$.
\end{itemize}
\end{lemma}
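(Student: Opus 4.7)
The statement is a verbatim application of the general dyadic lattice construction of David and Mattila to the specific Radon measure $\mu = \omega^p$, so my plan is to invoke that construction directly rather than re-do it. Concretely, I would cite \cite[Theorem 3.2]{David-Mattila}, which, given any Radon measure $\mu$ on $\R^{n+1}$ and parameters $C_0 > 1$ and $A_0 > 5000\,C_0$, produces a sequence of partitions $\DD_k$ of $\supp\mu$ satisfying exactly the four bulleted properties listed. Setting $\mu = \omega^p$ (which is a finite Radon measure, since $\Omega$ may be assumed bounded here, or is treated via a standard truncation/localization argument in the unbounded case) gives the lemma. I would add one sentence pointing out that $\omega^p$ has no atoms on $\pom$ for the typical pole $p$, so the general construction applies without modification.

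The first bullet (nested partitions) comes from the Christ-type iterative selection of a maximal $\tfrac15 A_0^{-k}$-net in $W$ and a Voronoi-like assignment of points to the chosen centers $z_Q$; this is the standard skeleton of a dyadic lattice on a metric measure space. The second bullet (existence of the comparability ball $B(Q)$ with the disjointness of $5B(Q)$) is immediate from the maximality of the net at scale $A_0^{-k}$. The third bullet, small boundaries with the geometric decay \eqref{eqsmb2}, is the most delicate part of the David-Mattila construction: it is achieved by choosing the actual radius $r(Q)\in [A_0^{-k}, C_0 A_0^{-k}]$ on a case by case basis so that the annular $\omega^p$-mass near $\partial Q$ decays, exploiting a pigeonhole argument over $\log_{A_0} C_0$ available radii. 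The fourth bullet (distinction of doubling cubes $\DD_k^{db}$) is then a direct consequence of the radius choice in the third bullet: if a cube fails the doubling condition \eqref{eqdob22}, one shows the only admissible radius is $A_0^{-k}$ and iterates to get \eqref{eqdob23}.

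The main obstacle, and the only step where one would actually do work if the cited theorem were not available, is the small-boundary property \eqref{eqsmb2}. This is because it requires a quantitatively sharp choice of the radius $r(Q)$ that is simultaneously compatible with the nesting, and with the doubling/non-doubling dichotomy in the fourth bullet. In our setting this delicate balance has already been performed in \cite{David-Mattila} in full generality for arbitrary Radon measures, so I would not reproduce it.

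In short, my proof would be a three- or four-line invocation: verify that $\omega^p$ qualifies as a Radon measure to which \cite[Theorem 3.2]{David-Mattila} applies, apply it with the stated parameters $C_0, A_0$, and read off the four properties. No new ideas beyond the David-Mattila construction itself are needed here.
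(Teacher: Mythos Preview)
Your proposal is correct and matches the paper's approach exactly: the paper does not prove this lemma either, but simply states that the lattice ``has been constructed in \cite[Theorem 3.2]{David-Mattila} (with $\omega^p$ replaced by a general Radon measure)'' and then records the properties. Your added remark about atoms is unnecessary (the David--Mattila construction works for arbitrary Radon measures), but harmless.
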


\vv

We use the notation $\DD=\bigcup_{k\geq0}\DD_k$. Observe that the families $\DD_k$ are only defined for $k\geq0$. So the diameters of the cubes from $\DD$ are uniformly
bounded from above.
%For $Q\in\DD$, we set $\DD(Q) =\{P\in\DD:P\subset Q\}$.
Given $Q\in\DD_k$, we denote $J(Q)=k$. 
We set
$\ell(Q)= 56\,C_0\,A_0^{-k}$ and we call it the side length of $Q$. Notice that 
$$\frac1{28}\,C_0^{-1}\ell(Q)\leq \diam(B(Q))\leq\ell(Q).$$
Observe that $r(Q)\sim\diam(B(Q))\sim\ell(Q)$.
Also we call $z_Q$ the center of $Q$, and the cube $Q'\in \DD_{k-1}$ such that $Q'\supset Q$ the parent of $Q$.
 We set
$B_Q=28 \,B(Q)=B(z_Q,28\,r(Q))$, so that 
$$W\cap \tfrac1{28}B_Q\subset Q\subset B_Q.$$

We assume $A_0$ big enough so that the constant $C^{-1}C_0^{-3d-1}A_0$ in 
\rf{eqsmb2} satisfies 
$$C^{-1}C_0^{-3d-1}A_0>A_0^{1/2}>10.$$
Then we deduce that, for all $0<\lambda\leq1$,
\begin{align*}
\omega^p\bigl(\{x\in Q:\dist(x,W\setminus Q)\leq \lambda\,\ell(Q)\}\bigr) + 
\omega^p\bigl(\bigl\{x\in 3.5B_Q:\dist&(x,Q)\leq \lambda\,\ell(Q)\}\bigr)\\
&\leq
c\,\lambda^{1/2}\,\omega^p(3.5B_Q).
\end{align*}

We denote
$\DD^{db}=\bigcup_{k\geq0}\DD_k^{db}$.
Note that, in particular, from \rf{eqdob22} it follows that
\begin{equation}\label{eqdob*}
\omega^{p}(3B_{Q})\leq \omega^p(100B(Q))\leq C_0\,\omega^p(Q)\qquad\mbox{if $Q\in\DD^{db}.$}
\end{equation}
For this reason we will call the cubes from $\DD^{db}$ doubling. 

As shown in \cite[Lemma 5.28]{David-Mattila}, every cube $R\in\DD$ can be covered $\omega^p$-a.e.\
by a family of doubling cubes:
\vv

\begin{lemma}\label{lemcobdob}
Let $R\in\DD$. Suppose that the constants $A_0$ and $C_0$ in Lemma \ref{lemcubs} are
chosen suitably. Then there exists a family of
doubling cubes $\{Q_i\}_{i\in I}\subset \DD^{db}$, with
$Q_i\subset R$ for all $i$, such that their union covers $\omega^p$-almost all $R$.
\end{lemma}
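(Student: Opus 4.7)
The plan is to take $\{Q_i\}$ to be the family of maximal doubling descendants of $R$ and to show that the exceptional set $E$ of points in $R$ missed by all $Q_i$ has $\omega^p(E)=0$. More precisely, let $\{Q_i\}_{i\in I}$ be the collection of $Q\in\DD^{db}$ with $Q\subseteq R$ such that no strict ancestor $Q'$ satisfying $Q\subsetneq Q'\subseteq R$ is doubling. The nested structure of the David--Mattila lattice makes this family automatically pairwise disjoint and subordinate to $R$, and a point $x\in R$ lies outside $\bigcup_i Q_i$ precisely when every dyadic cube in the chain from $R$ down through $x$ is non-doubling; in particular, when $R$ itself is non-doubling (otherwise $\{Q_i\}=\{R\}$ and there is nothing to prove).

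For each $k\ge J(R)$, let $\NN_k\subseteq \DD_k\setminus\DD_k^{db}$ denote the cubes $Q\subseteq R$ all of whose ancestors in $R$ (including $Q$) are non-doubling. Then $E\subseteq F_k:=\bigcup_{Q\in\NN_k}Q$ for every $k$, and it suffices to prove $\omega^p(F_k)\to 0$. The central tool is the non-doubling decay \rf{eqdob23}: taking $l_0:=\lfloor\log_{100}C_0\rfloor$, every $Q\in\NN_k$ satisfies
$$\omega^p(100B(Q))\le C_0^{-l_0}\,\omega^p(100^{l_0+1}B(Q)).$$

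The key geometric observation is the inclusion
$$100^{l_0+1}B(Q)\subset 100B(Q'),$$
where $Q'\in\DD_{k-1}$ is the parent of $Q$. This follows by a short triangle-inequality computation using $z_Q\in Q\subseteq 28B(Q')$, the exact radius $r(Q)=A_0^{-k}$ valid for non-doubling cubes, and the hypothesis $A_0>5000\,C_0$. Since $Q\in\NN_k$ forces $Q'\in\NN_{k-1}$, and since the disjoint balls $\{5B(Q)\}_Q$ yield a uniform bound $M=M(A_0,C_0,n)$ on the number of children a cube can have, grouping the sum by parent gives
$$S_k:=\sum_{Q\in\NN_k}\omega^p(100B(Q))\;\le\; M\,C_0^{-l_0}\sum_{Q'\in\NN_{k-1}}\omega^p(100B(Q'))\;=\;M\,C_0^{-l_0}\,S_{k-1}.$$
With $C_0$ chosen suitably large (as permitted by the statement), one has $MC_0^{-l_0}<1$; since $S_{J(R)}\le 1$ is finite, the estimate iterates to $S_k\to 0$, whence $\omega^p(F_k)\le S_k\to 0$ and $\omega^p(E)=0$.

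The main obstacle is the constant balance at the very last step: $M$ is polynomial in $C_0$ and $A_0$ (and thus in $C_0$, since $A_0$ is chosen as a multiple of $C_0$), while $C_0^{l_0}$ grows like $C_0^{\log C_0/\log 100}$. One must therefore choose $C_0$ larger than a threshold of the form $100^{O(n)}$ before the geometric decay engages; this threshold is exactly what is meant by the ``suitably'' clause. Everything else---the inclusion of enlarged balls into parent balls, the disjointness and subordination of the $Q_i$, and the identification of $E$ with the set of points whose entire chain in $R$ is non-doubling---is routine from the properties listed in Lemma \ref{lemcubs}.
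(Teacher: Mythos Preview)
The paper does not supply its own proof of this lemma; it simply cites \cite[Lemma~5.28]{David-Mattila}. Your argument is correct and is essentially the standard one underlying that reference: isolate the points of $R$ whose entire dyadic chain inside $R$ is non-doubling, and show this set is $\omega^p$-null by exploiting the geometric decay \rf{eqdob23}. The inclusion $100^{l_0+1}B(Q)\subset 100\,B(Q')$ is verified exactly as you indicate (here it matters that $Q'\in\NN_{k-1}$ is itself non-doubling, so $r(Q')=A_0^{-(k-1)}$, which together with $A_0>5000\,C_0$ makes the triangle-inequality computation go through), and the uniform bound on the number of children follows from the disjointness of the balls $5B(Q)$ at a fixed generation.

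The one place requiring care is the constant balance you flag at the end, and you handle it correctly: with $A_0$ taken as a fixed power of $C_0$ (which is how the constants are eventually pinned down in the paper), the children bound $M$ is polynomial in $C_0$ while $C_0^{l_0}$ grows like $C_0^{\log C_0/\log 100}$, so for $C_0$ beyond a dimensional threshold the contraction $MC_0^{-l_0}<1$ holds. This is precisely the content of the ``suitably'' hypothesis.

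A slightly different route, available once Lemma~\ref{lemcad22}/\ref{lemcad23} is established, is to observe that for $x\in E$ the chain $Q_k(x)$ is entirely non-doubling, whence $\Theta_\omega(100B(Q_k(x)))\to 0$ geometrically; this forces the upper $n$-density of $\omega^p$ to vanish on $E$ and hence $\omega^p(E)=0$. Your summation argument has the advantage of being self-contained (it does not appeal to a density theorem) and of not relying on Lemma~\ref{lemcad22}, consistent with the ordering in \cite{David-Mattila}.
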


The following result is proved in \cite[Lemma 5.31]{David-Mattila}.
\vv

\begin{lemma}\label{lemcad22}
Let $R\in\DD$ and let $Q\subset R$ be a cube such that all the intermediate cubes $S$,
$Q\subsetneq S\subsetneq R$ are non-doubling (i.e.\ belong to $\DD\setminus \DD^{db}$).
Then
\begin{equation}\label{eqdk88}
\omega^p(100B(Q))\leq A_0^{-10n(J(Q)-J(R)-1)}\omega^p(100B(R)).
\end{equation}
\end{lemma}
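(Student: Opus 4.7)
The plan is to iterate the non-doubling estimate \eqref{eqdob23} along the chain of ancestors $Q = Q_k \subsetneq Q_{k-1} \subsetneq \cdots \subsetneq Q_j = R$, where $Q_i \in \DD_i$, $k = J(Q)$, and $j = J(R)$. By hypothesis $Q_i$ is non-doubling for every $j < i < k$, so Lemma \ref{lemcubs} gives $r(Q_i) = A_0^{-i}$ at these levels. Let $l_0$ be the largest integer with $100^{l_0} \leq C_0$, and apply \eqref{eqdob23} with $l = l_0$ at each non-doubling intermediate cube to obtain $\omega^p(100 B(Q_i)) \leq C_0^{-l_0}\, \omega^p(100^{l_0+1} B(Q_i))$ for $j < i < k$.

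The key geometric step is the inclusion $100^{l_0+1} B(Q_i) \subset 100 B(Q_{i-1})$. Since $z_{Q_i} \in Q_i \subset Q_{i-1} \subset 28 B(Q_{i-1})$, the centers satisfy $|z_{Q_i} - z_{Q_{i-1}}| \leq 28\, r(Q_{i-1})$; and since $100^{l_0+1} r(Q_i) \leq 100\, C_0\, A_0^{-i} \leq 100\, C_0\, A_0^{-1}\, r(Q_{i-1}) < r(Q_{i-1})/50$ by $A_0 > 5000\, C_0$ (using only the lower bound $r(Q_{i-1}) \geq A_0^{-(i-1)}$, valid even if $Q_{i-1} = R$ happens to be doubling), the ball $100^{l_0+1} B(Q_i)$ lies inside $B(z_{Q_{i-1}}, 29\, r(Q_{i-1})) \subset 100 B(Q_{i-1})$. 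Iterating the resulting inequality $\omega^p(100 B(Q_i)) \leq C_0^{-l_0}\, \omega^p(100 B(Q_{i-1}))$ from $i = k-1$ down to $i = j+1$ and telescoping yields $\omega^p(100 B(Q_{k-1})) \leq C_0^{-l_0(k-j-1)}\, \omega^p(100 B(R))$. An entirely analogous inclusion $100 B(Q) \subset 100 B(Q_{k-1})$, using only the general upper bound $r(Q) \leq C_0\, A_0^{-k}$ (since $Q$ itself need not be non-doubling), closes the chain.

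To convert $C_0^{-l_0(k-j-1)}$ into the claimed $A_0^{-10n(k-j-1)}$, it suffices to ensure $C_0^{l_0} \geq A_0^{10n}$. Since $l_0 = \lfloor \log_{100} C_0 \rfloor$, the quantity $C_0^{l_0}$ grows roughly like $C_0^{\log_{100} C_0}$, so this inequality holds once $C_0$ is chosen sufficiently large and $A_0$ is then taken in the (then nonempty) range between $5000\, C_0$ and $C_0^{l_0/(10n)}$. The main obstacle is really just the constant bookkeeping: one must fix $C_0$ and $A_0$ simultaneously so that they are compatible with the David--Mattila construction of the lattice in Lemma \ref{lemcubs} and with the target exponent $A_0^{10n}$ demanded here, but this is a single finite choice settled at the outset of the paper and carries no analytic difficulty.
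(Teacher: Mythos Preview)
The paper does not provide its own proof of this lemma; it simply cites \cite[Lemma 5.31]{David-Mattila}. Your argument is correct and is precisely the standard one: iterate the non-doubling gain \eqref{eqdob23} along the chain of ancestors, use the inclusion $100^{l_0+1}B(Q_i)\subset 100B(Q_{i-1})$ (which follows from $A_0>5000\,C_0$ together with $r(Q_i)=A_0^{-i}$ for non-doubling $Q_i$), and telescope; the residual constraint $C_0^{l_0}\ge A_0^{10n}$ that you isolate is exactly the kind of compatibility the paper absorbs into the blanket remark ``From now on we will assume that $C_0$ and $A_0$ are some big fixed constants so that the results stated in the lemmas of this section hold.''
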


%Let us remark that the constant $10$ in \rf{eqdk88} can be replaced by any other positive 
%constant if $A_0$ and $C_0$ are chosen suitably in Lemma \ref{lemcubs}, as shown in (5.30) of
%\cite{David-Mattila}.

Given a ball $B\subset \R^{n+1}$, we consider its $n$-dimensional density:
$$\Theta_\omega(B)= \frac{\omega^p(B)}{r(B)^n}.$$
%We will also write $\Theta_\omega^p(x,r)$ instead of $\Theta_\omega^p(B(x,r))$.

From the preceding lemma we deduce:

\vv
\begin{lemma}\label{lemcad23}
Let $Q,R\in\DD$ be as in Lemma \ref{lemcad22}.
Then
$$\Theta_\omega(100B(Q))\leq C_0\,A_0^{-9n(J(Q)-J(R)-1)}\,\Theta_\omega(100B(R))$$
and
$$\sum_{S\in\DD:Q\subset S\subset R}\Theta_\omega(100B(S))\lesssim_{A_{0},C_{0}}\,\Theta_\omega(100B(R)).$$
\end{lemma}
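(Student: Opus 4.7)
The plan is to deduce both parts directly from Lemma~\ref{lemcad22} by converting its measure bound into a density bound, using the size estimates on $r(Q)$ from Lemma~\ref{lemcubs}.

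For part 1, I would start from
$$\omega^p(100B(Q))\leq A_0^{-10n(J(Q)-J(R)-1)}\,\omega^p(100B(R))$$
and divide both sides by $(100\,r(Q))^n$, inserting $(100\,r(R))^n/(100\,r(R))^n$ on the right to recover $\Theta_\omega(100B(R))$. Since every David--Mattila cube satisfies $A_0^{-J(\cdot)}\leq r(\cdot)\leq C_0\,A_0^{-J(\cdot)}$, the ratio $r(R)^n/r(Q)^n$ is bounded by $C_0^n\,A_0^{n(J(Q)-J(R))}$. The simple arithmetic $-10n(k-1)+nk=-9n(k-1)+n$, with $k:=J(Q)-J(R)$, then produces a residual prefactor of order $A_0^n C_0^n$ and the claimed exponent $-9n(J(Q)-J(R)-1)$. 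Absorbing the multiplicative constant into the one in front yields the stated bound.

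For part 2, I would observe that for every $S$ in the chain $Q\subset S\subset R$, the cubes strictly between $S$ and $R$ are in particular strictly between $Q$ and $R$, and so are non-doubling. Hence I can apply part 1 with $S$ in place of $Q$ to get
$$\Theta_\omega(100B(S))\lesssim_{A_0,C_0}A_0^{-9n(J(S)-J(R)-1)}\,\Theta_\omega(100B(R)).$$
Since the cubes in $\DD_k$ are pairwise disjoint, at most one $S$ from the chain sits at each level $k$ with $J(R)\leq k\leq J(Q)$. Summing the geometric series
$$\sum_{k\geq J(R)}A_0^{-9n(k-J(R)-1)}\lesssim_{A_0,n}1$$
then delivers the second inequality.

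The argument is essentially bookkeeping; Lemma~\ref{lemcad22} does all the work. The only mildly delicate point is to verify that the exponent $-10n$ in Lemma~\ref{lemcad22} leaves room to spare after paying the $r^n$ cost in the denominator of $\Theta_\omega$. This is exactly what produces a strictly negative exponent $A_0^{-9n}$, which both yields the pointwise bound in part 1 and ensures convergence of the geometric series in part 2, provided $A_0$ is taken large in the cube construction (as was already assumed in Section~\ref{secdya}).
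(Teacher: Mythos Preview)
Your argument is correct and is exactly the standard one; the paper does not spell out a proof but refers to \cite[Lemma 4.4]{Tolsa-memo}, where the same bookkeeping is carried out. One cosmetic point: your arithmetic actually produces the prefactor $C_0^n A_0^n$ rather than the literal $C_0$ written in the statement, so ``absorbing the multiplicative constant into the one in front'' is not quite right as phrased---but since only the form $C(A_0,C_0,n)\,A_0^{-9n(J(Q)-J(R)-1)}$ is ever used (and in particular only the summability in part~2 matters for the application), this is harmless.
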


For the easy proof, see
 \cite[Lemma 4.4]{Tolsa-memo}, for example.

\vv
From now on we will assume that $C_0$ and $A_0$ are some big fixed constants so that the
results stated in the lemmas of this section hold. 
%Further, we will choose the pole $p\in\Omega$ of the harmonic measure $\omega^p$ so that $\dist(p,\,\d \Omega) \geq 10C_0$. The existence of such point $p$ can be assumed by dilating $\Omega$ by a suitable factor if necessary.

\vv

\section{The proof of Theorem \ref{teo1}}

\subsection{The strategy}
It is enough to prove the statement (a) in the theorem.
We fix a point $p\in\Omega$ far from the boundary to be specified later.
To prove that $\omega^p|_E$ is rectifiable we will show that any subset of positive harmonic measure 
of $E$ contains another subset $G$ of positive harmonic measure such that $\RR_*\omega^p(x)<\infty$ in $G$.
Applying a deep theorem essentially due to Nazarov, Treil and Volberg, one deduces that $G$ contains yet another
subset $G_0$ of positive harmonic measure such that $\RR_{\omega^p|_{G_0}}$ is bounded in $L^2(\omega^p|_{G_0})$. Then from
the results of Nazarov, Tolsa and Volberg in \cite {NToV} and \cite{NToV-pubmat}, it follows that $\omega^p|_{G_0}$ is $n$-rectifiable.
This suffices to prove the full $n$-rectifiability of $\omega^p|_E$.

One of the difficulties of Theorem \ref{teo1} is due to the fact that the non-Ahlfors  regularity of $\partial\Omega$ makes it difficult to apply some usual tools from potential of theory, such as the ones developed by Aikawa in \cite{Ai1} and \cite{Ai2}. In our proof we solve this issue by applying some stopping time arguments involving the harmonic measure and a suitable Frostman measure. 

The connection between harmonic measure and the Riesz transform is already used, at least implicitly, in the work of Hofmann, Martell and Uriarte-Tuero \cite{HMU}, and more explicitely in the paper by Hofmann, Martell and Mayboroda \cite{HMM}.
Indeed, in \cite{HMU}, in order to prove the uniform rectifiability of $\partial\Omega$,
the authors rely on the study of a square function related to the double gradient of the single layer potential and the application of an appropriate rectifiability criterion due to David and Semmes \cite{DS}. Note that the gradient of the single layer potential coincides with the Riesz transform away from the boundary.\\

\subsection{Reduction to bounded domains}

We now begin the proof of Theorem \ref{teo1} (a) in earnest, and begin by reducing it to the bounded case.

\begin{lemma}
If Theorem \ref{teo1} (a) holds for $\Omega$ bounded, then it holds for all $\Omega\subseteq \R^{n+1}$, $n\geq 1$.
\end{lemma}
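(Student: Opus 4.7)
The plan is to exhaust $\Omega$ by bounded connected open subdomains, invoke the bounded case of Theorem~\ref{teo1}(a) on each piece, and amalgamate the resulting Lipschitz graphs. Concretely, fix the pole $p\in\Omega$ and pick $k_0$ large enough that $p\in B(0,k_0)$; for each integer $k\ge k_0$ let $\Omega_k$ be the connected component of $\Omega\cap B(0,k)$ containing $p$. Each $\Omega_k$ is a bounded open connected set, and because $\Omega$ is connected any $x\in\Omega$ is joined to $p$ by a compact arc in $\Omega$, which therefore lies in $\Omega_k$ for all sufficiently large $k$; in particular $\bigcup_k \Omega_k=\Omega$.

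The workhorse of the proof is a monotone convergence statement for harmonic measures along this exhaustion: for every Borel set $F\subset\partial\Omega$,
\[
\omega^{p}_{\Omega_k}(F\cap\partial\Omega_k)\ \le\ \omega^{p}_{\Omega_{k+1}}(F\cap\partial\Omega_{k+1})\ \le\ \omega^{p}(F),\qquad \omega^{p}_{\Omega_k}(F\cap\partial\Omega_k)\ \longrightarrow\ \omega^{p}(F).
\]
Here I write $\omega^{p}_{\Omega_k}$ for the harmonic measure of $\Omega_k$ at $p$. The two inequalities follow from a Perron comparison: any nonnegative upper function $h$ for $\chi_F$ on $\Omega$ restricts to an upper function for $\chi_{F\cap\partial\Omega_k}$ on $\Omega_k$, because $h\ge 0$ on the ``artificial'' portion $\partial B(0,k)\cap\overline\Omega$ of $\partial\Omega_k$, and symmetrically between $\Omega_k$ and $\Omega_{k+1}$. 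For the convergence I would use the probabilistic interpretation of harmonic measure as the Brownian exit distribution: a trajectory from $p$ that exits $\Omega$ inside $F$ has bounded range on $[0,\tau]$ and hence stays in $\Omega_k$ for $k$ large, so the events $\{B_{\tau_k}\in F\}$ increase almost surely to $\{B_\tau\in F\}$.

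Granting the above, the lemma is straightforward. Set $E_k:=E\cap\partial\Omega_k$; then $\HH^n(E_k)\le \HH^n(E)<\infty$, and the inequality $\omega^{p}_{\Omega_k}\le\omega^{p}$ on $\partial\Omega\cap\partial\Omega_k$ propagates absolute continuity, giving $\omega^{p}_{\Omega_k}\ll\HH^n$ on $E_k$. The bounded case of Theorem~\ref{teo1}(a) applied to $\Omega_k$ and $E_k\subset\partial\Omega_k$ then produces a countable union $\Gamma_k$ of $n$-dimensional Lipschitz graphs with $\omega^{p}_{\Omega_k}(E_k\setminus\Gamma_k)=0$. Setting $\Gamma:=\bigcup_k\Gamma_k$, still a countable union of Lipschitz graphs, one has $(E\setminus\Gamma)\cap\partial\Omega_k\subset E_k\setminus\Gamma_k$, hence $\omega^{p}_{\Omega_k}((E\setminus\Gamma)\cap\partial\Omega_k)=0$ for every $k$, and the monotone convergence above yields $\omega^{p}(E\setminus\Gamma)=0$, as required.

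The main obstacle I anticipate is providing a clean, purely analytic justification of the convergence $\omega^{p}_{\Omega_k}(F\cap\partial\Omega_k)\to\omega^{p}(F)$ without appealing to Brownian motion, since the paper is otherwise written in PDE/potential-theoretic language. A suitable replacement exploits that the Green functions $G_{\Omega_k}(\cdot,p)$ increase pointwise to $G_\Omega(\cdot,p)$ (an elementary consequence of the minimality in the definition of the Green function recalled in Section~3) together with sweeping/Riesz-representation and a truncation of $F$. A secondary nuisance is the possible disconnectedness of $\Omega\cap B(0,k)$, handled throughout by passing to the component of $p$ as above.
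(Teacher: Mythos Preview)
Your proposal is correct and follows essentially the same route as the paper: exhaust $\Omega$ by the connected components containing $p$ of $\Omega\cap B(0,k)$, use monotone convergence of the corresponding harmonic measures on Borel subsets of $\partial\Omega$ (the paper simply cites \cite[Section~3]{HM12} for this, whereas you sketch it via Perron comparison and a Brownian-motion/Green-function argument), apply the bounded case on each $\Omega_k$, and collect the resulting Lipschitz graphs. The only cosmetic differences are that the paper first reduces to bounded $E$ and phrases the final step as an intersection of $\omega^p_{\Omega_k}$-null sets rather than a union of graphs; your formulation is equivalent.
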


\begin{proof}
Here we will follow the construction of harmonic measure from \cite[Section 3]{HM12}. To that end we fix $x_0\in\pom$ and $N$ large enough and set $\Omega_N=\Omega\cap B(x_0,2N)$, whose harmonic measure is denoted by $\omega_N$. We may assume that $E$ in the hypotheses of Theorem \ref{teo1} is bounded (otherwise we may chopped into bounded pieces and work with each of them separately). We assume that $N$ is large enough (say $N\ge N_0$) so that $E\subset B(x_0,N/2)$ and observe that from \cite[Section 3]{HM12} and the inner and outer regularity of harmonic measure  we can easily see that for every fixed $x\in\Omega$, $\omega_N^x(A)\nearrow \omega^x(A)$as $N\to\infty$  for every Borel bounded set $A\subset \pom$. In particular, $\omega^p|_E\ll \HH^n|_E$ implies that 
$\omega^p_N|_E\ll \HH^n|_E$. Note that in the process we may have lost the connectivity of $\Omega_N$ so we need to take $\Omega_N^p$ the connected component of $\Omega_N$ containing $p$. Call $E_N=\partial\Omega_N^p\cap E$ and note that 
$\omega^p_N|_{E_N}\ll \HH^n|_{E_N}$. If $\HH^n(E_N)=0$ then $\omega^p_N|_{E_N}(E_N)=0$ and hence 
$\omega_N^p|_{E_N}$ is $n$-rectifiable. Otherwise, $0<\HH^n(E_N)\le \HH^n(E)<\infty$ and by the bounded case we conclude that $\omega^p_N|_{E_N}$ is $n$-rectifiable. Note also that $\omega^p_N|_{E\setminus E_N}\equiv 0$ ($\omega^p_N$ is supported on $\partial \Omega_N^p$). Hence, $w_N^p|_{E}$ is $n$-rectifiable, that is, there exists a Borel set $F_N\subset E$ with $\omega^p_N(F_N)=0$ and a countable collection of (possibly rotated) Lipschitz graphs $\{\Gamma_j^N\}_j$ so that $E\subset F_N\cup(\cup_j \Gamma_j^N)$. We next set $F_0=\cap_{N\ge N_0} F_N$ which is a Borel set satisfying that $\omega_N^p(F_0)=0$ for every $N\ge N_0$. This implies that $\omega^p(F_0)=\lim_{N\to\infty} \omega_N^p(F_0)=0$. Clearly, $E\subset F_0\cup(\cup_{N\ge N_0}\cup_j \Gamma_j^N)$ and this shows that $\omega^p|_E$ is rectifiable as desired.
\end{proof}

Thus, assume $\Omega$ is a connected bounded open set and let $E$ be as in Theorem \ref{teo1} (a), fix a point $p\in\Omega$, and consider the harmonic measure $\omega^p$ of $\Omega$ with pole at $p$. 
The reader may think that $p$ is point deep inside $\Omega$.

Let $g\in L^1(\omega^p)$ be such that
$$\omega^p|_E = g\,\HH^n|_{\partial\Omega}.$$
Given $M>0$, let 
$$E_M= \{x\in\partial\Omega:M^{-1}\leq g(x)\leq M\}.$$
Take $M$ big enough so that $\omega^p(E_M)\geq \omega^p(E)/2$, say.
Consider an arbitrary compact set $F_M\subset E_M$ with $\omega^p(F_M)>0$. We will show that there exists $G_0\subset F_M$
with $\omega^p(G_0)>0$ which is $n$-rectifiable. Clearly, this suffices to prove that $\omega^p|_{E_M}$ is $n$-rectifiable,
and letting $M\to\infty$ we get the full $n$-rectifiability of $\omega^p|_E$.

Let $\mu$ be an $n$-dimensional Frostman measure for $F_M$. That is, $\mu$ is a non-zero Radon measure supported on $F_M$
such that 
$$\mu(B(x,r))\leq C\,r^n\qquad \mbox{for all $x\in\R^{n+1}$.}$$
Further, by renormalizing $\mu$, we can assume that $\|\mu\|=1$. Of course the constant $C$ above will depend on 
$\HH^n_\infty(F_M)$, and the same may happen for all the constants $C$ to appear,  but this will not bother us. Notice that $\mu\ll\HH^n|_{F_M}\ll \omega^p$. In fact, for any set $H\subset F_M$,
\begin{equation}\label{Frostman}
\mu(H)\leq C\,\HH^n_\infty(H)\leq C\,\HH^n(H)\leq C\,M\,\omega^p(H).
\end{equation}

\vv

\subsection{The bad cubes}\label{secbad}

Now we need to define a family of bad cubes.
We say that $Q\in\DD$ is bad and we write $Q\in\bad$, if $Q\in\DD$ is a maximal cube satisfying one of the conditions below: 
\begin{itemize}\itemsep=0.2cm
\item[(a)] $\mu(Q)\leq \tau\,\omega^p(Q)$, where $\tau>0$ is a small parameter to be fixed below, or
\item[(b)]  $\omega^p(3B_Q)\geq A\,r(B_Q)^n$, where $A$ is some big constant to be fixed below.
\end{itemize}
The existence maximal cubes is guaranteed by the fact that all the cubes from $\DD$ have side length uniformly bounded from
above (since $\DD_k$ is defined only for $k\geq0$). 
If the condition (a) holds, we write $Q\in\LM$ (little measure $\mu$) and in the case (b), $Q\in\HD$ (high density).
On the other hand, if a cube $Q\in\DD$ is not contained in any cube from $\bad$, we say that $Q$ is good and we write
$Q\in\good$.
 
Notice that 
$$ 
\sum_{Q\in\LM\cap\bad} \mu(Q) \leq \tau \sum_{Q\in\LM\cap\bad} \omega^p(Q) \leq \tau\,\|\omega\|=\tau=\tau\,\mu(F_M).$$
Therefore, taking into account that $\tau\leq1/2$ and that $\omega^p|_{F_M}=g(x)\,\HH^n|_{F_M}$ with $g(x)\geq M$, we have by \eqref{Frostman}
\begin{align*}
\frac12\,\omega^p(F_M)&\leq \frac{1}{2}=\frac12\,\mu(F_M)\leq \mu\Bigl(F_M\setminus \bigcup_{Q\in\LM\cap\bad} Q\Bigr)\\
&\leq C\,\HH^n\Bigl(F_M\setminus \bigcup_{Q\in\LM\cap\bad} Q\Bigr)\leq C\,M\,\omega^p\Bigl(F_M\setminus \bigcup_{Q\in\LM\cap\bad} Q\Bigr)
.
\end{align*}

On the other hand, we claim that $\Theta^{n,*}(x,\omega^p):=\limsup_{r\to 0}\frac{\omega^p(B(x,r))}{(2r)^n}<\infty$ for $\omega^p$-a.e. $x\in E$. Indeed, by \cite[Theorem 2.12]{Ma}, $\lim_{r\rightarrow 0} \omega(B(x,r))/\HH^{n}|_{E}(B(x,r))$ exists and is finite for $\HH^{n}$-a.e.  $x\in E$. 
Also, since we have assumed that $\HH^n(E)<\infty$, \cite[Theorem 6.2]{Ma} gives $\limsup_{r\rightarrow 0} \HH^n|_E(B(x,r))/(2\,r)^n\le 1$ for $\HH^{n}$-a.e.  $x\in E$. Gathering these two we conclude that $\Theta^{n,*}(x,\omega^p)<\infty$ for $\HH^{n}$-a.e.  $x\in E$ and hence for $\omega^p$-a.e. $x\in E$ since by assumption $\omega|_E\ll \HH^{n}|_E$. This shows the claim. 

As a consequence we next claim that for any $\delta>0$ and for $A$ big enough
$$
\omega^p\biggl(F_M\bigcap \bigg(\bigcup_{Q\in\HD}Q\bigg)\biggr) <\delta\,\omega^p(F_M).
$$
Indeed it is straightforward to show that
$$
\omega^p\biggl(F_M\bigcap \bigg(\bigcup_{Q\in\HD}Q\bigg)\biggr)
\le
\omega^p\big(\{x\in E:\ \M^n\omega^p(x)\ge 4^{-n}A\}\big)
$$
and since $\omega^p(F_M)>0$ it suffices to see that $\omega^p(E_k)\to 0$ as $k\to \infty$ with $E_{k}=\{x\in E:\  \M^n\omega^p(x)>k\}$. If this were not true we would have that $\omega^p(E_{k})\geq c>0$ for all $k\ge 1$, and then $\omega^p(\{x\in E:\ \M^n\omega^p(x)=\infty\})\ge c$. We know from the previous claim that $\M^n\omega^p<\infty$,  $\omega^p$-a.e. in $E$,  hence get a contradiction.

From the above estimates it follows that  for $\delta$ small enough 
\begin{equation}\label{eqbig}
\omega^p\biggl(F_M\setminus \bigcup_{Q\in\bad} Q\biggr) >0,
\end{equation}
if $\tau$ and $A$ have been chosen appropriately. 
\vv

For technical reasons we have now to introduce a variant of the family $\DD^{db}$ of doubling cubes
defined in Section \ref{secdya}.
Given some constant $T\geq C_0$ (where $C_0$ is the constant in Lemma \ref{lemcubs}) to be fixed below,
we say that $Q\in\wt\DD^{db}$ if
$$
\omega^p(100B(Q))\leq T\,\omega^p(Q).
$$
We also set $\wt \DD^{db}_k=\wt\DD^{db}\cap \DD_k$ for $k\geq0$.
From \rf{eqdob*} and the fact that $T\geq C_0$, it is clear that $\DD^{db}\subset \wt\DD^{db}$.

\vv

\begin{lemma}\label{lemgg}
 If the constant $T$ is chosen big enough, then 
$$\omega^p\biggl(F_M \cap \bigcup_{Q\in\wt\DD_0^{db}} Q  \setminus
 \bigcup_{Q\in\bad} Q
\biggr) >0.$$
\end{lemma}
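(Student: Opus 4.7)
My plan is to combine \eqref{eqbig} with a simple counting argument showing that the total $\omega^p$-mass of the top-level cubes that fail the $T$-doubling condition is at most $N/T$, where $N$ depends only on $n$ and $C_0$. Taking $T$ sufficiently large then makes this tail strictly smaller than the positive quantity $\omega^p(F_M\setminus \bigcup_{Q\in\bad} Q)$ guaranteed by \eqref{eqbig}, which will yield the lemma.

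More concretely, Lemma \ref{lemcubs} tells us that $\DD_0$ partitions $W=\supp\omega^p$, that every $Q\in\DD_0$ satisfies $r(Q)\in[1,C_0]$, and that the balls $\{5B(Q)\}_{Q\in\DD_0}$ are pairwise disjoint. Since any $100B(Q)$ containing a given point $x$ has center $z_Q\in B(x,100C_0)$, and since $B(z_Q,5)\subset 5B(Q)$ by the lower bound $r(Q)\geq 1$, a standard volume comparison yields
$$\#\{Q\in\DD_0 : x\in 100B(Q)\}\leq N$$
for some $N=N(n,C_0)$. Integrating the resulting pointwise inequality $\sum_{Q\in\DD_0}\chi_{100B(Q)}\leq N$ against the probability measure $\omega^p$ gives
$$\sum_{Q\in\DD_0}\omega^p(100B(Q))\leq N.$$

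Now by definition, every $Q\in\DD_0\setminus\wt\DD_0^{db}$ satisfies $\omega^p(Q)<T^{-1}\omega^p(100B(Q))$. Summing this bound over such $Q$, and using that $W=\bigsqcup_{Q\in\DD_0} Q$ together with $\omega^p(F_M\setminus W)=0$, we obtain
$$\omega^p\biggl(F_M\setminus \bigcup_{Q\in\wt\DD_0^{db}} Q\biggr) \leq \sum_{Q\in\DD_0\setminus\wt\DD_0^{db}} \omega^p(Q) \leq \frac{N}{T}.$$
By \eqref{eqbig}, the quantity $\eta := \omega^p(F_M\setminus \bigcup_{Q\in\bad} Q)$ is strictly positive, so choosing $T>N/\eta$ forces
$$\omega^p\biggl(F_M\cap \bigcup_{Q\in\wt\DD_0^{db}} Q\setminus \bigcup_{Q\in\bad} Q\biggr) \geq \eta - \frac{N}{T} > 0,$$
which is the desired conclusion. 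The whole argument is a Vitali-type estimate; I do not foresee any substantive obstacle. The one mildly technical input is the bounded-overlap bound for $\{100B(Q)\}_{Q\in\DD_0}$, which is routine thanks to the uniform control $r(Q)\in[1,C_0]$ available only at the top level.
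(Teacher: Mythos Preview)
Your proof is correct and follows essentially the same route as the paper: both use \eqref{eqbig}, the defining inequality $\omega^p(Q)<T^{-1}\omega^p(100B(Q))$ for $Q\in\DD_0\setminus\wt\DD_0^{db}$, and the bounded overlap of $\{100B(Q)\}_{Q\in\DD_0}$ to bound $\omega^p\bigl(\bigcup_{Q\in\DD_0\setminus\wt\DD_0^{db}}Q\bigr)\leq C/T$, then choose $T$ large. The only difference is cosmetic---you spell out the volume-counting justification for the bounded overlap, whereas the paper simply invokes it.
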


Notice that above $\wt\DD_0^{db}$ stands for the family of cubes from the zero level of $\wt\DD^{db}$.

\begin{proof}
By the preceding discussion we already know that 
$$\omega^p\biggl(F_M\setminus \bigcup_{Q\in\bad} Q\biggr) >0.$$
If $Q\not\in\wt\DD^{db}$, then $\omega^p(Q)\leq T^{-1}\omega^p(100B(Q))$. Hence by the finite overlap of the balls
$100B(Q)$  associated with cubes from $\DD_0$ we get
$$\omega^p\biggl(\,\bigcup_{Q\in\DD_0\setminus\wt\DD^{db}} Q\biggr) \leq \frac1T\sum_{Q\in\DD_0}\omega^p(100B(Q))\leq
\frac CT\,\|\omega^p\|=\frac CT.$$
Thus for $T$ big enough we derive
$$\omega^p\biggl(\,\bigcup_{Q\in\DD_0\setminus\wt\DD^{db}} Q\biggr) \leq \frac12\,\omega^p\biggl(F_M\setminus \bigcup_{Q\in\bad} Q\biggr),$$
and then the lemma follows.
\end{proof}
\vv

Notice that for the points $x\in F_M\setminus \bigcup_{Q\in\bad} Q$, from the condition (b) in the definition
of bad cubes, it follows that
$$\omega^p(B(x,r))\lesssim A\,r^n\qquad \mbox{for all $0<r\leq 1$.}$$
Trivially, the same estimate holds for $r\geq1$, since $\|\omega^p\|=1$. So we have
\begin{equation}\label{eqcc0}
\M^n\omega^p(x)\lesssim A\quad \mbox{ for $\omega^p$-a.e.\ $x\in F_M\setminus \bigcup_{Q\in\bad} Q$.}
\end{equation}

\vv

\subsection{The key lemma about the Riesz transform of $\omega^p$ on the good cubes}

\begin{lemma}[Key lemma]
Let $Q\in\good$ be contained in some cube from the family $\wt{\DD}_0^{db}$, and $x\in Q$. Then we have
\begin{equation}\label{eqdk0}
\bigl|\RR_{r(B_Q)}\omega^p(x)\bigr| \leq  C(A,M,T,\tau,d(p)),
\end{equation}
where, to shorten notation, we wrote $d(p)= \dist(p,\partial\Omega)$.
\end{lemma}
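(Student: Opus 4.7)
The plan is to express $\RR_{r(B_Q)}\omega^p(x)$ at the boundary point $x\in Q$ in terms of the Green function with pole $p$ via a cut-off version of Green's second identity, and then estimate the resulting Green function integrals using Lemma~\ref{l:w>G} together with Bourgain's Lemma. The underlying identity, obtained by differentiating the formula from Remark~\ref{remgreen*} in the first argument, is
\[
\RR\omega^p(\tilde x)\,=\,\frac{\tilde x-p}{|\tilde x-p|^{n+1}}\,+\,\frac{1}{c_n(n-1)}\,\nabla_{\tilde x}G(\tilde x,p)\qquad\text{for }\tilde x\in\Omega\setminus\{p\},
\]
which reduces bounding the Riesz transform of $\omega^p$ at interior points to estimates on $\nabla G(\cdot,p)$.

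To transfer such a bound to the boundary point $x$, I would apply the Riesz representation $\int u\,d\omega^p=u(p)+\int_\Omega G(y,p)\,\Delta u(y)\,dy$ to the test function $u(y)=\phi(y)\,\vec v\cdot(x-y)/|x-y|^{n+1}$, where $\vec v\in\R^{n+1}$ is an arbitrary unit vector and $\phi$ is a smooth radial cutoff equal to $1$ on the annulus $\{r(B_Q)\le|x-y|\le r(B_{Q_0})\}$ and vanishing both on $B(x,r(B_Q)/2)$ and outside $B(x,2\,r(B_{Q_0}))$. The left-hand side then equals $\vec v\cdot\RR_{r(B_Q)}\omega^p(x)$ up to error terms supported in the two transition annuli, which are $O(A)$ thanks to the $n$-dimensional density bound $\omega^p(B(x,t))\lesssim A\,t^n$ valid for $r(B_Q)\le t\le r(B_{Q_0})$ (this holds because every ancestor of $Q$ in $\DD$ is good and therefore not in $\HD$). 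The pole term $u(p)$ is bounded by $C\,d(p)^{-n}$ whenever $r(B_Q)\lesssim d(p)$; the regime $r(B_Q)\gtrsim d(p)$ is treated separately and absorbed into the constant.

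The remaining task is to estimate $\int_\Omega G(y,p)\,\Delta u(y)\,dy$. Since $(x-y)\cdot\vec v/|x-y|^{n+1}$ is harmonic in $y\neq x$, $\Delta u$ is supported where $\nabla\phi\neq 0$, with $|\Delta u|\lesssim r(B_Q)^{-n-2}$ on the inner annulus and $\lesssim r(B_{Q_0})^{-n-2}$ on the outer one. The outer contribution is controlled with the elementary bound $G(y,p)\lesssim|y-p|^{1-n}$ and the fact that $r(B_{Q_0})\sim 1$. For the inner contribution I would apply Lemma~\ref{l:w>G} to $B=B(x,r(B_Q))$ with $a$ chosen large enough so that $2B\subset\delta\,aB$ (for the $\delta$ of Lemma~\ref{lembourgain}); Bourgain's Lemma then gives $\inf_{z\in 2B\cap\Omega}\omega^z(aB)\gtrsim\HH^n_\infty(\partial\Omega\cap 2B)/r(B_Q)^n$. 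Combined with $\omega^p(aB)\lesssim A\,(a\,r(B_Q))^n$ from goodness and the Frostman lower bound $\HH^n_\infty(\partial\Omega\cap 2B)\gtrsim\mu(Q)\ge\tau\,\omega^p(Q)$ (from $Q\notin\LM$), this produces a pointwise upper bound for $G(y,p)$ throughout $B\cap\Omega$.

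The main obstacle I anticipate is the scale-invariance of the final estimate: the chain of inequalities sketched above naively leaves a factor of $r(B_Q)^n/\omega^p(Q)$ that is not controlled by goodness alone, since a good cube need not enjoy a lower $n$-dimensional density bound. I expect the resolution to combine (i) the top-level information $\omega^p(Q_0)\gtrsim 1/T$ coming from $Q_0\in\wt{\DD}_0^{db}$, (ii) the $M$-comparability of $\omega^p$ and $\HH^n$ on $E_M$, and (iii) a telescoping/stopping-time argument along the chain of good ancestors of $Q$ which amortizes the missing density factor through intermediate scales. The appearance of all five parameters $A$, $M$, $T$, $\tau$ and $d(p)$ in the final constant precisely reflects how each hypothesis in the good-cube construction enters this argument.
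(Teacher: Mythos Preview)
Your framework is sound: linking the truncated Riesz transform of $\omega^p$ to the Green function via the identity $c_n\nabla_x G(x,p)=K(x-p)-\RR\omega^p(x)$ (or its mollified version), and then feeding in Lemma~\ref{l:w>G} together with Bourgain's estimate applied through the Frostman measure $\mu$, is exactly the engine the paper uses. You have also correctly located the obstacle.

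However, the resolution you sketch does not close the gap. The factor $r(B_Q)^n/\omega^p(Q)$ cannot be removed by invoking $M$-comparability on $E_M$ (you have no control on $\HH^n(Q\cap E_M)$ for an individual good cube) or by the top-level bound $\omega^p(Q_0)\gtrsim 1/T$. The point is that in your chain you should \emph{not} estimate $\omega^p(aB)$ by $A\,(a\,r(B_Q))^n$. Instead, prove the estimate first for $Q\in\good\cap\wt\DD^{db}$: choose $a$ so that $aB\subset 3B_Q$, and then doubling gives $\omega^p(aB)\le\omega^p(3B_Q)\le T\,\omega^p(Q)$, while $Q\notin\LM$ gives $\mu(Q)>\tau\,\omega^p(Q)$. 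Hence the ratio $\omega^p(aB)/\mu(Q)$ is at most $C\,T/\tau$, and the factor cancels cleanly; this is precisely where $T$ and $\tau$ enter the constant. For $Q\in\good\setminus\wt\DD^{db}$ you then telescope up to the smallest ancestor $Q'\in\wt\DD^{db}$ (which exists because $Q$ lies in some $Q_0\in\wt\DD^{db}_0$) and invoke Lemma~\ref{lemcad23}: all intermediate cubes are non-doubling, so $\sum_{Q\subset S\subset Q'}\Theta_\omega(2B_S)\lesssim\Theta_\omega(3B_{Q'})\lesssim A$. The parameter $M$ is already built into the Frostman measure $\mu$ (supported on $F_M\subset E_M$) and does not need to re-enter separately.

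One further issue you have not addressed: Lemma~\ref{l:w>G} is stated for $n\ge 2$ and its proof breaks down in the plane, since the fundamental solution is logarithmic. The paper treats $n=1$ by a separate argument, replacing the pointwise Green bound by an averaged oscillation $|G(y,p)-G(z,p)|$ over $y,z\in B(x,r)$ and a direct maximum-principle comparison; your proposal would need a corresponding modification.
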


\vspace{1mm}
\begin{proof}[\bf Proof of the Key Lemma in the case \boldmath{$n\geq2$}] 
To prove the lemma, clearly we may assume that $r(B_Q)\ll\dist(p,\partial\Omega)$ for any $P\in \good$. 
First we will prove \rf{eqdk0} for $Q\in\wt\DD^{db}\cap\good$.
In this case, by definition we have
\begin{equation}\label{eqcond**}
\mu(Q)> \tau\,\omega^p(Q)\quad \mbox{and}\quad \omega^p(3B_Q)\leq T\,\omega^p(Q).
\end{equation}

Let $\vphi:\R^{n+1}\to[0,1]$ be a radial $\CC^\infty$ function  which vanishes on $B(0,1)$ and equals $1$ on $\R^{n+1}\setminus B(0,2)$,
and for $\ve>0$ and $z\in \R^{n+1}$ denote
$\vphi_\ve(z) = \vphi\left(\frac{z}\ve\right) $ and $\psi_\ve = 1-\vphi_\ve$.
We set
$$\wt\RR_\ve\omega^p(z) =\int K(z-y)\,\vphi_\ve(z-y)\,d\omega^p(y),$$
where $K(\cdot)$ is the kernel of the $n$-dimensional Riesz transform. 

Let $\delta>0$ be the constant appearing in Lemma \ref{lembourgain} about Bourgain's estimate. Consider a ball $\wt B_Q$ centered at some point from $B_Q\cap\partial\Omega$ with $r(\wt B_Q)= \frac{\delta}{10} \,r(B_Q)$ and so that $\mu(\wt B_Q)\gtrsim \mu(B_Q)$, with the implicit constant depending
on $\delta$.
Note that, for every $x,z\in B_Q$, by standard Calder\'on-Zygmund estimates 
$$\bigl|\wt\RR_{r(\wt B_Q)}\omega^p(x)- \RR_{r(B_Q)}\omega^p(z)\bigr|\leq C(\delta)\,\M^n_{r(\wt B_Q)}\omega^p(z),$$
and
$$\M^n_{r(\wt B_Q)}\omega^p(z)\leq C(\delta,A)\qquad \mbox{for all $z\in B_Q$,}$$
 since $Q$ being good implies that
$Q$ and all its ancestors are not from $\HD$.
Thus,
to prove \rf{eqdk0} it suffices to show that
\begin{equation}\label{eqdk00}
\bigl|\wt\RR_{r(\wt B_Q)}\omega^p(x)\bigr| \leq  C(\delta,A,M,T,\tau,d(p)) \quad\mbox{ for the center $x$ of $\wt B_Q$.}
\end{equation}

To shorten notation, in the rest of the proof we will write $r=r(\wt B_Q)$, so that $\wt B_Q=B(x,r)$.
For a fixed $x\in Q\subset\pom$ and $z\in \R^{n+1}\setminus \bigl[\supp(\vphi_r(x-\cdot)\,\omega^p)\cup \{p\}\bigr]$, consider the function
$$u_r(z) = \EE(z-p) - \int \EE(z-y)\,\vphi_r(x-y)\,d\omega^p(y),$$
so that, by Remark \ref{remgreen*},
\begin{equation}\label{eqfj33}
G(z,p) = u_r(z) - \int \EE(z-y)\,\psi_r(x-y)\,d\omega^p(y)\quad \mbox{ for $m$-a.e.   $z\in\R^{n+1}$.}
\end{equation}

Since the kernel of the Riesz transform is
\begin{equation}\label{eqker}
K(x) = c_n\,\nabla \EE(x),
\end{equation}
for a suitable absolute constant $c_n$, we have
$$\nabla u_r(z) = c_n\,K(z-p) - c_n\,\RR(\vphi_{ r}(\cdot-x)\,\omega^p)(z).$$

In the particular case $z=x$ we get
$$\nabla u_r(x) = c_n\,K(x-p) - c_n\,\wt\RR_r\omega^p(x),$$
and thus
\begin{equation}\label{eqcv1}
|\wt\RR_r\omega^p(x)|\lesssim \frac1{d(p)^n} + |\nabla u_r(x)|.
\end{equation}

Since $u_r$ is harmonic in $\R^{n+1}\setminus \bigl[\supp(\vphi_r(x-\cdot)\,\omega^p)\cup \{p\}\bigr]$ (and so in $B(x,r)$),  
we have
\begin{equation}\label{eqcv2}
|\nabla u_r(x)| \lesssim \frac1r\,\avint_{B(x,r)}|u_r(z)|\,dm(z).
\end{equation}
From the identity \rf{eqfj33} we deduce that
\begin{align}\label{eqcv3}
|\nabla u_r(x)| &\lesssim \frac1r\,\avint_{B(x,r)}G(z,p)\,dm(z) + 
\frac1r\,\avint_{B(x,r)}
\int \EE(z-y)\,\psi_r(x-y)\,d\omega^p(y)\,dm(z) \nonumber\\
& =:I + II.
\end{align}
To estimate the term $II$ we use Fubini and the fact that $\supp\psi_r\subset B(x,2r)$:
\begin{align*}
II & \lesssim \frac1{r^{n+2}}\, \int_{y\in B(x,2r)}\int_{z\in B(x,r)} \frac 1{|z-y|^{n-1}} \,dm(z)\,d\omega^p(y)\\
& \lesssim \frac{\omega^p(B(x,2r))}{r^{n}} \lesssim \M^n\omega^p(x).
\end{align*}

We intend to show now that $I\lesssim 1$.
Clearly it is enough to show that
\begin{equation}\label{eqsuf1}
\frac1r\,| G(y,p)|\lesssim 1\qquad\mbox{for all $y\in  B(x,r)\cap\Omega$.}
\end{equation}
To prove this, observe that by Lemma \ref{l:w>G} (with $B= B(x,r)$, $a=2\delta^{-1}$), for all $y\in B(x,r)\cap\Omega$,
we have
$$\omega^{p}(B(x,2\delta^{-1}r))\gtrsim \inf_{z\in B(x,2r)\cap \Omega} \omega^{z}(B(x,2\delta^{-1}r))\, r^{n-1}\,|G(y,p)|.$$
On the other hand, by Lemma \ref{lembourgain}, for any $z\in B(x,2r)\cap\Omega$,
$$\omega^{z}(B(x,2\delta^{-1}r))\gtrsim \frac{\mu(B(x,2r))}{r^n}\geq \frac{\mu(\wt B_Q)}{r^n}.$$
Therefore we have
$$
\omega^{p}(B(x,2\delta^{-1}r))
\gtrsim 
\frac{\mu(\wt B_Q)}{r^n}\, r^{n-1}\,|G(y,p)|,
$$
and thus
$$
\frac1r\,| G(y,p)|\lesssim \frac{\omega^{p}(B(x,2\delta^{-1}r))}{\mu(\wt B_Q)}.
$$
Now, recall that by construction $\mu(\wt B_Q)\gtrsim \mu(B_Q)\geq \mu(Q)$ and
$B(x,2\delta^{-1}r)=2\delta^{-1} \wt B_Q\subset 3B_Q$, since $r(\wt B_Q)=\frac\delta{10}r(B_Q)$, and so
we have
$$
\frac1r\,| G(y,p)|\lesssim 
\frac{\omega^{p}(B(x,2\delta^{-1}r))}{\mu(\wt B_Q)}\lesssim \frac{\omega^{p}(3B_Q)}{\mu(Q)}\lesssim
\frac{\omega^{p}(Q)}{\mu(Q)}\leq C,$$
by \rf{eqcond**}. So \rf{eqsuf1} is proved and the proof of the Key lemma is complete in the case $n\geq2$, $Q\in\wt\DD^{db}$.

\vv

Consider now the case $Q\in\good\setminus\wt\DD^{db}$. Let $Q'\supset Q$ be the cube from $\wt\DD^{db}$ with minimal side length.
The existence of $Q'$  is guaranteed by the fact that we have assumed that $Q$ is contained in some cube from $\wt\DD^{db}_0$.
For all $y\in B_Q$ then we have
$$|\RR_{r(B_Q)}\omega^p (y)| \leq |\RR_{r(B_{Q'})}\omega^p (y)| + C\,\sum_{P\in\DD: Q\subset P\subset Q'}\Theta_\omega(2B_P). 
$$
The first term on the right hand side is bounded by some constant depending on $A,M,\tau,\ldots$
by the previous case since $Q'\in \wt\DD^{db}\cap\good$.
To bound the last sum we can apply Lemma \ref{lemcad23} (because the cubes that are not from $\wt\DD^{db}$ do not belong to $\DD^{db}$ either) and then we get 
$$
\sum_{P\in\DD: Q\subset P\subset Q'}\Theta_\omega(2B_P)
\lesssim 
\Theta_\omega(100\,B(Q'))
\lesssim \Theta_\omega(3B_{Q'}),
$$ 
where we have used  that $Q'\in \wt\DD^{db}$.
Finally, since $Q'\not\in\HD$, we have $\Theta_\omega(3B_{Q'})\lesssim A$. So \rf{eqdk0} also holds in this case.
\end{proof}
\vvv

\begin{proof}[\bf Proof of the Key Lemma in the planar case \boldmath{$n=1$}]

We note  that the arguments to prove Lemma \ref{l:w>G} fail in the planar case. Therefore this cannot be
applied to prove the Key Lemma and some changes are required. 

We follow the same scheme and notation as in the case $n\geq2$ and highlight the important modifications. 
We claim that for any constant $\alpha\in \R$,
\begin{equation}\label{eq1**}
\bigl|\wt\RR_r\omega^p(x)\bigr|\lesssim \frac{1}{r}
\avint_{B(x,r)} |G(y,p) - \alpha|\, 
\,dm(y) + \frac1{d(p)} + \M^1\omega^p(x).
\end{equation}
To check this, we can argue as in the proof of the Key Lemma for $n\geq2$ to get\begin{equation}\label{eq2**}
|\wt\RR_r\omega^p(x)| \lesssim \frac{1}{d(p)} + |\nabla u_r(x)|.
\end{equation}

Since $u_r$ is harmonic in $\R^{2}\setminus \bigl[\supp(\vphi_r(x-\cdot)\,\omega^p)\cup \{p\}\bigr]$ 
for any constant $\alpha'\in\R^n$, we have
\begin{equation}
\label{eq:aasasd}
|\nabla u_r(y)| \lesssim \frac1r\,\avint_{B(y,r)}|u_r(\cdot)-\alpha'|\,dm(z),
\quad
B(y,r)\subset \R^{2}\setminus \bigl[\supp(\vphi_r(x-\cdot)\,\omega^p)\cup \{p\}\bigr].
\end{equation}
Note that this estimate is the same as the one in in \rf{eqcv2} in the case $n\geq2$ with  $\alpha'=0$ and $y=x$. Let $\alpha'=\alpha'(x)=\alpha +\beta\int \psi_{r}(x-z)d\omega^{p}(z)$ where we choose $\beta=\avint_{B(0,3r)}\EE(z)dm(z)$. Recall that $x$ has been fixed and hence $\alpha$ is a constant. We can now apply \eqref{eq:aasasd} with $y=x$ since $B(x,r)\subset \R^{2}\setminus \bigl[\supp(\vphi_r(x-\cdot)\,\omega^p)\cup \{p\}\bigr]$ and identity \rf{eqfj33} to deduce that
\begin{align}\label{eqcv3'}
|\nabla u_r(x)| &\lesssim \frac1r\,\avint_{B(x,r)}|G(z,p)-\alpha|\,dm(z)  \nonumber\\
&\quad + 
\frac1r\,\avint_{B(x,r)}
\int |\EE(z-y)-\beta|\,\psi_r(x-y)\,d\omega^p(y) \,dm(z) \nonumber\\
& =:I + II,
\end{align}
for any $\alpha\in\R$.

To estimate the term $II$ we apply Fubini and that $\EE(\cdot)=-c_1\,\log|\cdot|$ is a BMO function
\begin{align}
II&
\leq  \frac cr\,
\int_{y\in B(x,2r)} \avint_{z\in B(x,r)}\left|\EE(z-y) -\beta\right|\,dm(z)\,d\omega^p(y)
\nonumber\\
&\lesssim
\frac cr\,\int_{y\in B(x,2r)} 
\avint_{B(0,3r)}\left|\EE(z) -\beta\right|\,dm(z)\,d\omega^p(y)
\nonumber\\
&\lesssim
\frac{\omega^p(B(x,2r))}{r}
\nonumber\\
&\leq 
\M^1\omega^p(x).\label{eq3**}
\end{align}
Hence \rf{eq1**} follows from \rf{eq2**}, \rf{eqcv3'}
and \rf{eq3**}

Choosing $\alpha=G(z,p)$ with $z\in B(x,r)$ in \rf{eq1**} and averaging with respect Lebesgue measure for such $z$'s,  we get
$$\bigl|\wt\RR_{r}\omega^p(x)\bigr|\lesssim \frac1{r^5}\iint_{B(x,r)\times B(x,r)} |G(y,p) - G(z,p)|\,dm(y)\,dm(z) + \frac1{d(p)}+ \M^1\omega^p(x),$$
where we understand that $G(z,p)=0$ for $z\not\in\Omega$.
Now for $m$-a.e. $y,z\in B(x,r)$, $p$ far away, and $\phi$ a radial smooth function such that $\phi\equiv 0$ in $B(0,2)$ and $\phi\equiv 1$ in $\R^{2}\setminus B(0,3)$, we write 
\begin{align*}
c_1(G(y,p) - G(z,p)) 
& = 
\log\frac{|z-p|}{|y-p|} - \int_{\pom} \log\frac{|z-\xi|}{|y-\xi|} \,d\omega^p(\xi) \\
& = 
\left(\log\frac{|z-p|}{|y-p|} - \int_{\partial\Omega} \phi\left(\frac{\xi-x}{r}\right)\,\log\frac{|z-\xi|}{|y-\xi|} \,d\omega^p(\xi)\right) \\
&\quad- \int_{\partial\Omega} \left(1-\phi\left(\frac{\xi-x}{r}\right)\right)\log\frac{|z-\xi|}{|y-\xi|} \,d\omega^p(\xi) 
= A_{y,z}  + B_{y,z}.
\end{align*}
Notice that the above identities also hold if $y,z\not\in\Omega$.
Let us observe that  
$$\frac{|z-p|}{|y-p|}\approx 1 \;\; \mbox{ and }\;\; 
\frac{|z-\xi|}{|y-\xi|}\approx 1\quad\mbox{ for $\xi\not \in B(x,2r)$.}
$$
We claim that  
\begin{equation}\label{eqlem33}
|A_{y,z}|\lesssim \frac{\omega^p(B(x,2\delta^{-1}r))}{\inf_{z\in B(x,2r)\cap \Omega} \omega^z(B(x,2\delta^{-1}r))}.
\end{equation}
We defer the details till the end of the proof.
By Lemma \ref{lembourgain}, we get
$$\inf_{z\in B(x,2r)\cap \Omega}\omega^{z}(B(x,2\delta^{-1}r))\gtrsim \frac{\mu(B(x,2r))}{r}\geq \frac{\mu(\wt B_Q)}{r}.$$
and thus
$$\frac{|A_{y,z}|}r\lesssim \frac{\omega^p(B(x,2\delta^{-1}r))}{\mu(\wt B_Q)}\lesssim 
\frac{\omega^{p}(Q)}{\mu(Q)},$$
by the doubling properties of $Q$ (for $\omega^p$) and the choice of $\wt B_Q$.

To deal with the term $B_{y,z}$ we write:
\begin{align*}
|B_{y,z}|&\leq \int_{ B(x,3r)} \left(\left|\log\frac{r}{|y-\xi|}\right| +  \left|\log\frac{r}{|z-\xi|}\right|\right) \,d\omega^p(\xi).
\end{align*}
Thus
\begin{align*}
\iint_{B(x,r)\times B(x,r)} &|B_{y,z}|\,dm(y)\,dm(z) \\
&\lesssim 
r^2\int_{B(x,r)}\int_{ B(x,3r)} \left|\log\frac{r}{|y-\xi|}\right| \,d\omega^p(\xi)\,dm(y).
\end{align*}
Notice that for all $\xi\in B(x,3r)$,
$$\int_{B(x,3r)}\left|\log\frac{r}{|y-\xi|}\right| \,dm(y)\lesssim r^2.$$
So by Fubini we obtain
$$
\frac1{r^5}\iint_{B(x,r)\times B(x,r)} |B_{y,z}|\,dm(y)\,dm(z)\lesssim \frac{\omega^p(B(x,3r))}r\lesssim\M^1\omega^p(x)
.$$
Together with the bound for the term $A_{y,z}$, this gives
$$
\bigl|\wt\RR_r\omega^p(x)\bigr|\lesssim \frac{\omega^{p}(Q)}{\mu(Q)} +\M^1\omega^p(x) + \frac1{d(p)}\lesssim 1,
$$
since $\M^1\omega^p(x)\lesssim1$ by \eqref{eqcc0}.
\vv

It remains now to show \rf{eqlem33}. The argument uses the ideas in Lemma \ref{l:w>G} with some modifications. 
Recall that
\begin{align*}
A_{y,z} 
&= 
A_{y,z}(p) = 
\log\frac{|z-p|}{|y-p|} - \int_{\partial\Omega} \phi\left(\frac{\xi-x}{r}\right)\,\log\frac{|z-\xi|}{|y-\xi|} \,d\omega^p(\xi)
\\
&=:
\log\frac{|z-p|}{|y-p|}-v_{x,y,z}(p)
\end{align*}
where $y,z\in B(x,r)$ and $p$ is far away.
The two functions 
$$
q\longmapsto A_{y,z}(q)\qquad\text{ and }\qquad q\longmapsto \frac{c\,\omega^q(B(x,2\delta^{-1}r))}{\inf_{z\in B(x,2r)\cap \Omega} \omega_\Omega^{z}(B(x,2\delta^{-1}r))}$$
 are harmonic in $\Omega\setminus B(x,2r)$. Note that for all $q\in\partial B(x,2r)\cap\Omega$
we clearly have
$$|A_{y,z}(q)|\leq c\leq \frac{c\,\omega^q(B(x,2\delta^{-1}r))}{\inf_{z\in B(x,2r)\cap \Omega} \omega_\Omega^{z}(B(x,2\delta^{-1}r))}.$$
 Since $A_{y,z}(x)=0$ for all $x\in\pom\setminus B(x,3r)$ except for a polar set we can apply maximum principle and obtain \rf{eqlem33}, as desired (the maximum principle is justified in the same way as in the proof of Lemma \ref{l:w>G}).
\end{proof}

\vv
From the Key Lemma above we deduce the following.

\begin{lemma}\label{lemaxcor}
Let $Q\in\good$ be contained in some cube from the family $\wt{\DD}_0^{db}$, and $x\in Q$. Then we have
\begin{equation}\label{eqdk10}
\RR_{*,r(B_Q)}\omega^p(x) \leq  C(A,M,  T,  \tau,d(p)),
\end{equation}
where, to shorten notation, we wrote $d(p)= \dist(p,\partial\Omega)$.
\end{lemma}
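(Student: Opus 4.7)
The plan is to bootstrap the Key Lemma---which controls the Riesz transform at the single truncation $\varepsilon=r(B_Q)$---into a uniform bound over all $\varepsilon>r(B_Q)$ by an ancestor-selection argument in the David--Mattila lattice. Let $Q_0\supset Q$ be the cube from $\wt\DD^{db}_0$ provided by the hypothesis, so that $r(B_{Q_0})\sim 1$. The key preliminary observation is that every $P\in\DD$ with $Q\subset P\subset Q_0$ automatically belongs to $\good$: were $P$ contained in some $R\in\bad$, then $Q\subset R$ as well, contradicting $Q\in\good$. In particular no such $P$ lies in $\HD$, so $\omega^p(3B_P)<A\,r(B_P)^n$, and since $x\in Q\subset P\subset B_P$ one has $B(x,r(B_P))\subset 3B_P$; this yields the density control
\begin{equation*}
\omega^p(B(x,\rho))\lesssim A\,\rho^n\qquad\text{for all }r(B_Q)\le\rho\lesssim r(B_{Q_0}).
\end{equation*}

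Now fix $\varepsilon>r(B_Q)$. If $\varepsilon\le r(B_{Q_0})$, I will select the smallest cube $P$ in the chain $Q\subset P\subset Q_0$ with $r(B_P)\ge\varepsilon$. By Lemma \ref{lemcubs} consecutive scales along this chain differ by at most a factor of order $A_0$, so in fact $r(B_P)\sim\varepsilon$. Since $P$ is good and $P\subset Q_0\in\wt\DD^{db}_0$, the Key Lemma applies and yields $|\RR_{r(B_P)}\omega^p(x)|\le C$. The correction is handled directly by
\begin{equation*}
|\RR_\varepsilon\omega^p(x)-\RR_{r(B_P)}\omega^p(x)|\le\int_{\varepsilon<|x-y|\le r(B_P)}\frac{d\omega^p(y)}{|x-y|^n}\le\frac{\omega^p(B(x,r(B_P)))}{\varepsilon^n}\lesssim A,
\end{equation*}
by the density bound together with $r(B_P)\sim\varepsilon$. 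If instead $\varepsilon>r(B_{Q_0})$, I will apply the Key Lemma to $Q_0$ itself (which is good, as argued above) and estimate the tail trivially by
\begin{equation*}
|\RR_\varepsilon\omega^p(x)-\RR_{r(B_{Q_0})}\omega^p(x)|\le\int_{|x-y|>r(B_{Q_0})}\frac{d\omega^p(y)}{|x-y|^n}\le\frac{\|\omega^p\|}{r(B_{Q_0})^n}\lesssim 1,
\end{equation*}
using $\|\omega^p\|=1$ and $r(B_{Q_0})\sim 1$. Taking the supremum over $\varepsilon$ in either case proves \eqref{eqdk10}.

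No genuine obstacle arises here: this is a routine scale-selection bootstrap of the Key Lemma, with the choice $r(B_P)\sim\varepsilon$ precisely engineered so that the correction annulus has bounded thickness ratio (avoiding the logarithmic divergence that would occur if one tried to telescope from $r(B_Q)$ all the way to $\varepsilon$ in one shot). The one point worth flagging is that one must know that \emph{every} ancestor of $Q$ up to $Q_0$ is good, both to invoke the Key Lemma at each intermediate scale and to access the density bound at that scale; this goodness is immediate from the maximality built into the definition of $\bad$.
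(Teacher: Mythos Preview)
Your proof is correct and is precisely the argument the paper has in mind: the paper states Lemma~\ref{lemaxcor} as an immediate consequence of the Key Lemma without supplying any details, and your ancestor-selection bootstrap (pick the ancestor $P$ with $r(B_P)\sim\varepsilon$, apply the Key Lemma there, control the correction annulus via the $\HD$ density bound) is exactly the standard way to fill that in. The only tiny remark is that for $\varepsilon>r(B_{Q_0})$ you do not even need the Key Lemma at $Q_0$; the direct estimate $|\RR_\varepsilon\omega^p(x)|\le \|\omega^p\|/\varepsilon^n\lesssim 1$ already suffices.
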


\vv

\subsection{The end of the proof of Theorem \ref{teo1} (a)}\label{secend}

Set
$$G= F_M \cap \bigcup_{Q\in\wt{\DD}_0^{db}} Q  \setminus
 \bigcup_{Q\in\bad} Q.$$
and recall that, by
Lemma \ref{lemgg}, 
$$\omega^p(G)>0.$$
As shown in \rf{eqcc0}, we have
\begin{equation}\label{eqcc1}
\M^n\omega^p(x)\lesssim A\quad \mbox{ for $\omega^p$-a.e.\ $x\in G$.}
\end{equation} 
On the other hand, from Lemma \ref{lemaxcor} is also clear that
\begin{equation}\label{eqcc2}
\RR_{*}\omega^p(x)\leq C(A,M, T, \tau,d(p))\quad \mbox{ for $\omega^p$-a.e.\ $x\in G$.}
\end{equation}

Now we will apply the following result.

\begin{theorem}  \label{teo**}
Let $\sigma$ be a Radon measure with compact support on $\R^{n+1}$   and consider a $\sigma$-measurable set
$G$ with $\sigma(G)>0$ such that
$$G\subset\{x\in \R^{n+1}: 
\M^n\sigma(x) < \infty \mbox{ and } \,\RR_* \sigma(x) <\infty\}.$$
Then there exists a Borel subset $G_0\subset G$ with $\sigma(G_0)>0$  such
that $\sup_{x\in G_0}\M^n\sigma|_{G_0}(x)<\infty$ and  $\RR_{\sigma|_{G_0}}$ is bounded in $L^2(\sigma|_{G_0})$.
\end{theorem}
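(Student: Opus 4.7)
The plan is to prove Theorem~\ref{teo**} by combining an Egorov-type extraction with the non-homogeneous $T1$ theorem of Nazarov, Treil and Volberg for the $n$-dimensional Riesz transform. The argument splits into three stages.

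First I would invoke Egorov's theorem applied to the two $\sigma$-a.e.\ finite functions $\M^n\sigma$ and $\RR_*\sigma$ on $G$ to obtain a compact subset $G_1\subset G$ with $\sigma(G_1)>0$ and a constant $C_1>0$ such that
$$\M^n\sigma(x)\leq C_1,\qquad \RR_*\sigma(x)\leq C_1\qquad\text{for all }x\in G_1.$$
The first estimate forces $\sigma|_{G_1}$ to have polynomial growth of degree $n$: for any ball $B=B(x,r)$ with $B\cap G_1\ne\emptyset$, pick $x^*\in B\cap G_1$ and use $\sigma|_{G_1}(B)\leq \sigma(B(x^*,2r))\leq 2^n C_1 r^n$; for balls disjoint from $G_1$ the estimate is trivial. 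In particular $\sup_{x\in G_1}\M^n(\sigma|_{G_1})(x)<\infty$.

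The main step is to refine $G_1$ to a Borel subset $G_0\subset G_1$ of positive $\sigma$-measure on which the maximal Riesz transform of the \emph{restricted} measure $\sigma|_{G_0}$ is uniformly bounded. Writing
$$\RR_\varepsilon(\sigma|_{G_0})(x)=\RR_\varepsilon\sigma(x)-\RR_\varepsilon(\sigma-\sigma|_{G_0})(x),$$
the first term is bounded by $C_1$ for $x\in G_0\subset G_1$, so the work is in estimating the second term uniformly in $\varepsilon$. To that end one picks a Lebesgue density point $x_0\in G_1$ of $\sigma|_{G_1}$ relative to $\sigma$, a small radius $r_0>0$ with $\sigma(B(x_0,r_0)\setminus G_1)\leq \eta\,\sigma(B(x_0,r_0))$ for some small $\eta$, and takes $G_0\subset G_1\cap B(x_0,r_0/2)$. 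The far contribution to $\RR_\varepsilon(\sigma-\sigma|_{G_0})(x)$ coming from $y\notin B(x_0,r_0)$ is bounded on $G_0$ by a constant depending on $r_0$, $C_1$, and $\diam(\supp\sigma)$, using the compact support of $\sigma$ and the polynomial growth bound $C_1$. The near contribution, coming from $y\in B(x_0,r_0)\setminus G_1$, is made uniformly small by discarding via Chebyshev a further exceptional subset of arbitrarily small $\sigma$-measure, on which the Riesz potential of the small measure $\sigma|_{B(x_0,r_0)\setminus G_1}$ is large. Provided $\eta$ is chosen small enough, $\sigma(G_0)>0$ survives the successive extractions.

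Once $G_0$ is in hand, $\sigma|_{G_0}$ is a Radon measure of polynomial growth of degree $n$ and $\RR(\sigma|_{G_0})\in L^\infty(\sigma|_{G_0})\subset\mathrm{BMO}(\sigma|_{G_0})$, whence the non-homogeneous $T1$ theorem for the $n$-dimensional Riesz transform, due to Nazarov, Treil and Volberg (see \cite{NToV}, \cite{NToV-pubmat}, or the corresponding chapter of Tolsa's monograph on non-homogeneous Calder\'on--Zygmund theory), yields $L^2(\sigma|_{G_0})$-boundedness of $\RR_{\sigma|_{G_0}}$. The genuine obstacle is the middle step: restricting a measure does not preserve bounds on singular integrals, and only the combination of a Lebesgue density point, the compact support of $\sigma$, and a Chebyshev refinement turns a pointwise bound on $\RR_*\sigma$ into a pointwise bound on $\RR_*(\sigma|_{G_0})$.
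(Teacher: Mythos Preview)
The paper does not give a self-contained proof of this theorem; it cites the non-homogeneous $Tb$ theorem of Nazarov--Treil--Volberg \cite{NTV}, \cite{Volberg} together with the methods of \cite{Tolsa-pams}, referring to \cite[Theorem~8.13]{Tolsa-llibre} for the detailed argument in the Cauchy case. Your first extraction (a compact $G_1\subset G$ with $\M^n\sigma\leq C_1$ and $\RR_*\sigma\leq C_1$ on $G_1$) is correct and is indeed the starting point. The genuine gap is in your middle step, where you try to pass from a bound on $\RR_*\sigma$ to a bound on $\RR_*(\sigma|_{G_0})$ by a density-point-plus-Chebyshev argument. Writing $\nu=\sigma|_{B(x_0,r_0)\setminus G_1}$ for the ``near'' piece, you want to control $\RR_*\nu$ on most of $G_1\cap B(x_0,r_0/2)$ via the Riesz potential $I\nu(x)=\int|x-y|^{-n}\,d\nu(y)$. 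But
\[
\int_{G_1\cap B(x_0,r_0/2)} I\nu\,d\sigma
=\int\Bigl(\int_{G_1\cap B(x_0,r_0/2)}|x-y|^{-n}\,d\sigma(x)\Bigr)d\nu(y),
\]
and since $\sigma|_{G_1}$ has exactly $n$-dimensional growth the inner integral is of order $C_1\log\bigl(r_0/\dist(y,G_1)\bigr)$, which need not be $\nu$-integrable: the smallness of $\|\nu\|$ cannot compensate a logarithmic blow-up, so Chebyshev yields no uniform bound. Your decomposition also omits the contribution of $\sigma|_{(B(x_0,r_0)\cap G_1)\setminus G_0}$; the annulus part is repairable by adjusting radii, but the Chebyshev-excised set suffers the same divergence.

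This is precisely why the correct route uses a $Tb$ theorem rather than $T1$. One works with $\mu=\sigma|_{G_1}$ (which has growth) and constructs accretive testing functions---or, equivalently, uses the NTV suppressed-kernel machinery---so that the hypothesis fed into the theorem is the known bound on $\RR_*\sigma$, not an unavailable bound on $\RR_*\mu$. The $Tb$ framework is designed to tolerate the uncontrolled piece $\sigma|_{G_1^c}$; forcing a $T1$ hypothesis first would require neutralizing that piece pointwise on a set of positive measure, which is essentially as hard as the theorem itself. (Note also that \cite{NToV}, \cite{NToV-pubmat} concern the passage from $L^2$-boundedness of the Riesz transform to rectifiability and do not contain the $T1$/$Tb$ theorem you need; the relevant sources are \cite{NTV}, \cite{Volberg}, \cite{Tolsa-llibre}.)
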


This result follows from the deep non-homogeneous Tb theorem of Nazarov, Treil and Volberg in \cite{NTV} (see also \cite{Volberg}) in combination with the methods in \cite{Tolsa-pams}. For the detailed proof in the case of the Cauchy
transform, see \cite[Theorem 8.13]{Tolsa-llibre}. The same arguments with very minor modifications work for the Riesz transform.
\vv

Recall that that $\partial\Omega$ is compact as we are in the case when $\Omega$ is bounded.  From \rf{eqcc1}, \rf{eqcc2} and Theorem \ref{teo**} applied to $\sigma=\omega^p$, we infer that there exists a subset $G_0\subset G$
such that the operator $\RR_{\omega^p|_{G_0}}$ is bounded in $L^2(\omega^p|_{G_0})$. By Theorem 1.1 of \cite{NToV-pubmat}
(or the David-L\'eger theorem  \cite{Leger} for $n=1$), we deduce
that $\omega^p|_{G_0}$ is $n$-rectifiable.

%If $\partial\Omega$ is non-compact, then we consider a ball $B(0,R)$ such that $\omega^p(G\cap B(0,R))>0$ and we set
%$\sigma = \chi_{B(0,2R)}\omega^p$. Since 
%$$\RR_*(\chi_{B(0,2R)^c}\omega^p)(x)\leq \frac{\omega^p(B(0,2R)^c)}{R}<\infty \quad \mbox{ for all $x\in B(0,R)$,}$$
%from \rf{eqcc2} we infer that 
%$\RR_*\sigma(x)<\infty$ for all $x\in G\cap B(0,R)$, and so we can argue as above.

\vv
\section{Some corollaries}

Here we list a few corollaries of Theorem \ref{teo1}. \vv

From Theorem \ref{teo1} in combination with the results of \cite{Az} we obtain the following characterization of sets of absolute continuity for NTA domains. 

\begin{corollary}
Let $\Omega\subset \R^{n+1}$ be an NTA domain (or in the case $n=1$, any simply connected domain), $p\in \Omega$, $n\geq 1$, and let $E\subset \d\Omega$ be such that $\HH^{n}(E)<\infty$ and $\omega^{p}(E)>0$. Then $\omega^{p}|_{E}\ll \HH^{n}|_{E}$ if and only if $E$ may be covered by countably many $n$-dimensional Lipschitz graphs up to a set of $\omega^{p}$-measure zero.
\end{corollary}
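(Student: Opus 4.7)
The plan is to prove the two implications separately; the forward direction is essentially immediate from Theorem \ref{teo1}, and the reverse direction is a packaging of known positive results for NTA domains.

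\textbf{Only if.} Suppose $\omega^p|_E\ll\HH^n|_E$. Since $\HH^n(E)<\infty$, Theorem \ref{teo1}(a) applies directly: there is a Borel set $N\subset E$ with $\omega^p(N)=0$ and a countable family $\{\Gamma_j\}$ of $n$-dimensional (possibly rotated) Lipschitz graphs such that $E\setminus N \subset \bigcup_j \Gamma_j$. This is precisely the desired covering.

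\textbf{If.} Conversely, suppose $E = N \cup \bigcup_j(E\cap \Gamma_j)$ with $\omega^p(N)=0$ and each $\Gamma_j$ a Lipschitz graph. Let $A\subset E$ be a Borel set with $\HH^n(A)=0$; it suffices to check $\omega^p(A)=0$. Since $\omega^p(A\cap N)=0$, we may discard $N$ and it is enough to verify that $\omega^p(A\cap\Gamma_j)=0$ for every $j$. Thus the entire task reduces to the implication
\[
F\subset \Gamma\cap\partial\Omega,\quad \HH^n(F)=0 \ \Longrightarrow\ \omega^p(F)=0,
\]
where $\Gamma$ is an $n$-dimensional Lipschitz graph. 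For $\Omega$ NTA, this is exactly (a consequence of) the main absolute-continuity result of Azzam \cite{Az}, typically proved by transferring $F$ into an auxiliary Lipschitz subdomain sitting above $\Gamma$, applying Dahlberg's theorem \cite{Da} there, and then using the boundary comparison principle in the NTA setting to propagate absolute continuity back to $\omega^p$. In the planar case $n=1$ with $\Omega$ simply connected, the same implication is the localized F.~and M.~Riesz theorem of Bishop and Jones \cite{BJ}.

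\textbf{Where the work lies.} The nontrivial direction is the forward one, which rests on the full strength of Theorem \ref{teo1}; the reverse direction is comparatively soft once one is willing to quote \cite{Az} (resp.\ \cite{BJ}) as a black box. The only genuinely delicate point to be aware of is that the reverse direction does rely on the topological hypothesis (NTA, or simple connectedness when $n=1$): without it the known counterexamples of Wu \cite{Wu} and Ziemer \cite{Z} show that rectifiability of $E$ alone does not force $\omega^p\ll\HH^n$ on $E$, so the corollary genuinely uses both Theorem \ref{teo1} and the NTA-side input of \cite{Az}.
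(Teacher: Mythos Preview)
Your proposal is correct and follows essentially the same argument as the paper: the forward direction is a direct application of Theorem \ref{teo1}(a), and the reverse direction is reduced, via countable additivity, to the absolute-continuity results of \cite{Az} in the NTA case and of \cite{BJ} in the planar simply connected case. Your write-up is slightly more detailed than the paper's (which just cites \cite{Az} and \cite{BJ} without spelling out the reduction), but the content is the same.
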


The forward direction is just Theorem \ref{teo1} (a). The reverse direction for $n=1$ follows from either the Local F. and M. Riesz Theorem of Bishop and Jones (see Theorem 1 of \cite{BJ}) and in the $n>1$ case from the first main result from \cite{Az}, which says that, for an NTA domain, whenever a Lipschitz graph $\Gamma$ (amongst more general objects) has $\omega(\Gamma)>0$, then $\omega\ll\HH^{n}$ on $\Gamma$.

\begin{corollary}
Let $\Omega\subset \R^{n+1}$ be an NTA domain whose exterior $(\Omega^{c})^{\circ}$ is also NTA, $p\in \Omega$, $n\geq 1$, and let $E\subset \d\Omega$ be such that $0<\HH^{n}(E)<\infty$. Then $ \HH^{n}|_{E}\ll \omega^{p}|_{E}$ if and only if $E$ may be covered by countably many $n$-dimensional Lipschitz graphs up to a set of $\HH^{n}$-measure zero.
\end{corollary}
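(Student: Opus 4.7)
The forward direction is immediate from Theorem~\ref{teo1}(b), so the plan is to attack the reverse implication. Assume $E$ is $\HH^n$-a.e.\ covered by countably many $n$-dimensional Lipschitz graphs; the goal is to prove $\HH^n|_E\ll \omega^p|_E$. By countable sub-additivity of Hausdorff measure, it suffices to show that whenever a Borel set $F\subset E$ is contained in a single Lipschitz graph $\Gamma$ and has $\HH^n(F)>0$, one has $\omega^p(F)>0$.

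At $\HH^n$-a.e.\ $x\in F$, the approximate tangent plane to $\Gamma$ at $x$ exists and $F$ has unit $n$-dimensional Lebesgue density at $x$; in particular, $\HH^n_\infty(\partial\Omega\cap B(x,\delta r))\ge \HH^n_\infty(F\cap B(x,\delta r))\gtrsim r^n$ for every sufficiently small $r>0$. By the interior corkscrew condition for $\Omega$, one can locate a corkscrew point $y\in\Omega\cap\delta B(x,r)$, and Bourgain's Lemma (Lemma~\ref{lembourgain}) then yields the uniform lower bound $\omega^y(B(x,r))\gtrsim 1$.

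The next step is to transfer this estimate from $y$ to the fixed pole $p$. Using the NTA comparison principle together with the Harnack chain condition, I would show $\omega^p(B(x,r))\gtrsim r^n$ for all $r$ small enough, for $x$ in a subset $F_0\subset F$ of positive $\HH^n$-measure (extracted by an Egoroff-type selection, to make the implicit constants uniform). A standard Vitali covering argument then forces $\omega^p(F_0)>0$, and hence $\omega^p(F)>0$, as required. Equivalently, this reverse implication may be extracted directly from the results of \cite{Az}, which establish, for two-sided NTA domains, mutual absolute continuity of harmonic measure and $\HH^n$ on rectifiable subsets of $\partial\Omega$ of finite $\HH^n$-measure.

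The principal obstacle is the lossless transfer of Bourgain's estimate from the nearby corkscrew pole $y$ back to the fixed pole $p$: a naive Harnack-chain argument costs a multiplicative factor $\sim r^{\alpha}$, which is too weak to preserve the $r^n$-density one needs for the covering argument. This is precisely where the exterior NTA hypothesis is essential; through the symmetric two-sided corkscrew and Harnack chain structures, the NTA comparison principle can be applied on both sides of $\partial\Omega$ to yield a clean $r^n$ lower bound on $\omega^p$ near rectifiable points, which is the crucial quantitative ingredient that makes the covering argument go through.
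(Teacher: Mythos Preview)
Your proposal is correct and, once stripped to its essentials, coincides with the paper's argument: the forward direction is Theorem~\ref{teo1}(b), and the reverse direction is the second main result of \cite{Az}, which gives $\HH^n\ll\omega$ on Lipschitz graphs for two-sided NTA domains. Your final sentence invoking \cite{Az} is exactly what the paper does.

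A remark on your direct sketch for the reverse implication: the Vitali step is fine once you have the uniform density bound $\omega^p(B(x,r))\gtrsim r^n$ (cover $F_0$ by small balls inside an open $U\supset F_0$ with $\omega^p(U)<\varepsilon$, extract a disjoint $5r$-subfamily, and bound $\HH^n_\infty(F_0)$ by $C\varepsilon$). The genuine content, as you yourself flag, is obtaining that lossless $r^n$ lower bound from Bourgain's estimate at the corkscrew point; the naive Harnack chain costs $r^\alpha$, and overcoming this via the two-sided NTA comparison machinery at rectifiable boundary points is precisely the substance of the theorem in \cite{Az}. So your sketch is really an outline of what \cite{Az} proves rather than an independent argument, and you are right to defer to that reference.
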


The first direction is Theorem \ref{teo1} (b), while the reverse direction follows from the second main result from \cite{Az}, which says that, for NTA domains with NTA complements, $\HH^{n}\ll \omega$ on any Lipschitz graph $\Gamma$.

\vvv

\end{document}